\title{ \textbf{Generalized Kato Decomposition For Operator Matrices and SVEP}  }
\author{\textbf{ Abdelaziz Tajmouati\,\,\,\, Mohammed Karmouni }}
\date{}
\newtheorem{theorem}{Theorem}[section]
\newtheorem{lemma}{Lemma}[section]
\newtheorem{proposition}{Proposition}[section]
\newtheorem{corollary}{Corollary}[section]
\newtheorem{example}{Example}
\begin{document}
\maketitle
\begin{center}
ABSTRACT
\end{center}
In this paper, we show that for  a bounded linear operator $T$,
the corresponding generalized Kato decomposition spectrum $\sigma_{gK}(T)$ satisfies the equality $\sigma_{gD}(T)=\sigma_{gK}(T)\cup (S(T)\cup S(T^*))$
 where $\sigma_{gD} (T ) $ is the generalized Drazin spectrum of $T$  and $S(T )$ (resp., $S(T^*)$
 is the set where T (resp., $T^*$) fails to have SVEP.  As  application, we give  sufficient conditions which assure
that the generalized Kato decomposition spectrum of an
upper triangular operator matrices is the union of its diagonal entries spectra. Moreover,  some applications are given.\\
$\newline$
\noindent \textbf{Key words:} Generalized Kato  decomposition,  generalized Drazin spectrum, left and right generalized Drazin
spectra, single-valued extension property, operator matrices.\\
$\newline$
\noindent \textbf{AMS Subject Classifications:} 47A10; 47A05; 47A55.

\section{Introduction and Preliminaries}

Throughout, $X$ denotes a complex Banach space and $\mathcal{B}(X)$ denotes the Banach algebra of all bounded linear
operators on $X$, we denote by $T^*$, $N(T)$,  $R(T)$, $ R^{\infty}(T)=\bigcap_{n\geq0}R(T^n)$,  $K(T)$, $ H_0(T)$,  $\rho(T)$, $\sigma(T)$,
 respectively the adjoint,  the null space, the range, the hyper-range, the analytic core, the quasinilpotent part, the resolvent set, the spectrum of $T$.

Recall that $T\in\mathcal{B}(X)$ is said to be Kato operator or
semi-regular if $R(T)$ is closed
and $N(T)\subseteq R^{\infty}(T)$. Denote by $\rho_{K}(T)$ :\\
$\rho_{K}(T)=\{\lambda\in\mathbb{C}: T-\lambda I\mbox{  is Kato }
\}$ the Kato resolvent  and
$\sigma_{K}(T)=\mathbb{C}\backslash\rho_{K}(T)$ the Kato spectrum
of $T$. It is well known that $\rho_{K}(T)$ is an open subset of
$\mathbb{C}$.\\
According to \cite[Definition 1.40]{Aie}, we say that
$T\in\mathcal{B}(X)$ admits a generalized Kato decomposition ,
abbreviated GKD  or Pesudo-Fredholm operator if there exists a pair of $T$-invariant closed
subspaces $(M,N)$ such that $X=M\oplus N$, the restriction
$T_{\shortmid M}$ is semi-regular, and $T_{\shortmid N}$ is
quasinilpotent. Obviously, every Kato operator admits a GKD
because in this case $M = X$ and $N = \{0\}$, again the
quasi-nilpotent operator
 admits a GKD: Take $M = \{0\}$ and $N = X$. If we suppose that
$T_{\shortmid N}$ is nilpotent of order $d \in\mathbb{ N }$ then $T$ is said to be of Kato type of operator of order $d$ or Quasi Fredholm operator. Finally $T$ is said
essentially semi-regular if it admits a GKD $(M, N)$ such that $N$ is finite-dimensional. Evidently every essentially semi-regular operator is of
 Kato type. The  generalized  Kato decomposition or Pseudo Fredholm  spectrum of $T$ is defined by $\sigma_{gK}(T)=\{\lambda\in \mathbb{C}: T-\lambda I\mbox{ is not Pseudo Fredholm} \}$,
 evidently $\sigma_{gK}(T)\subseteq\sigma_{K}(T)$. We refer to \cite{Aie} for more information about the topics of GKD.\\


Next, let $T\in\mathcal{B}(X)$, $T$ is said to have the single
valued extension property at $\lambda_{0}\in\mathbb{C}$ (SVEP) if
for every  open neighbourhood   $U\subseteq \mathbb{C}$ of
$\lambda_{0}$, the only  analytic function  $f: U\longrightarrow
X$ which satisfies
 the equation $(T-zI)f(z)=0$ for all $z\in U$ is the function $f\equiv 0$. $T$ is said to have the SVEP if $T$ has the SVEP for
 every $\lambda\in\mathbb{C}$. Denote by $A(T)=\{\lambda\in \mathbb{C}: T\mbox{ has   the SVEP at } \lambda\}$ and $S(T)=\mathbb{C}\backslash A(T)$, by \cite[proposition 1.2.16]{lau}
$A(T)=\mathbb{C}$ if and only if $X_T(\emptyset)=\{0\}$, if and only if $X_T(\emptyset)$ is closed where $X_T(\Omega)$ is the local  spectral subspace of $T$ associated with the open $\Omega $.\\
Obviously, every operator $T\in\mathcal{B}(X)$ has the SVEP at every $\lambda\in\rho(T)$, then $T$ and $T^*$ have the SVEP at every point of the boundary  $\partial( \sigma(T))$ of the spectrum.\\
An operator $T \in\mathcal{B}(X)$ is said to be decomposable if, for any open covering ${U_1, U_2}$ of the complex
plane $\mathbb{C}$,  there are two closed T-invariant subspaces $X_1$ and $X_2$ of $X$ such that
$X_1 + X_2 = X$ and $\sigma(T |X_k)\subset U_k$,  $k=1, 2$.\\
Note that $T$ is decomposable  implies that  $T$ and $T^*$ have the SVEP.\\

Let $T\in\mathcal{B}(X)$, the ascent of $T$ is defined by $a(T)=min\{p: N(T^p)=N(T^{p+1})\}$, if such $p$ does not exist we let $a(T)=\infty$. Analogously the descent of $T$ is $d(T)=min\{q: R(T^q)=R(T^{q+1})\}$, if such $q$ does not exist we let $d(T)=\infty$ \cite{LT}. It is well known that
if both $a(T)$ and $d(T)$ are finite then $a(T)=d(T)$ and we have the decomposition $X=R(T^p)\oplus N(T^p)$ where $p=a(T)=d(T)$.\\
The  descent and ascent spectra  of $T\in \mathcal{B}(X)$ are defined by :
    $$\sigma_{des}(T)=\{\lambda\in\mathbb{C},\,\, T-\lambda  \mbox{ has  not  finite   descent} \}$$
    $$\sigma_{ac}(T)=\{\lambda\in\mathbb{C},\,\, T-\lambda  \mbox{  has  not  finite  ascent} \}$$


Drazin.M.P in \cite{D} introduced the concept of Drazin inverse  for semigroups
 For a bounded operator,  $T\in\mathcal{B}(X)$ is said to be a Drazin invertible if there exists a positive integer $k$ and an operator  $S\in\mathcal{B}(X)$ such that  $$ST=TS, \,\,\,T^{k+1}S=T^k\,\, \,\,and\,\,  S^2T=S.$$
Which is also equivalent to  the fact that $T=T_1\oplus T_2$; where $T_1$ is invertible  and $T_2$ is nilpotent.\\
Recall that an operator $T$ is Drazin invertible if it has a finite ascent and descent.\\
The concept of Drazin invertible operators has been generalized by Koliha \cite{K}. In fact $T\in \mathcal{B}(X)$ is generalized Drazin invertible if and only if $0\notin acc\sigma(T)$ the set of all  points of accumulation of $\sigma(T)$, which is also equivalent to the fact that $T=T_1\oplus T_2$  where $T_1$ is invertible  and $T_2$ is quasinilpotent. The following statement are equivalent:
\begin{enumerate}
  \item $T$ is generalized Drazin invertible,
  \item $0$ is an isolated point in the spectrum $\sigma(T)$ of $T$ ;
  \item $K(T)$ is closed and $X =K(T)\oplus H_0(T)$,
\end{enumerate}

The  Drazin and generalized Drazin   spectra of $T\in \mathcal{B}(X)$ are defined by :
    $$\sigma_{gD}(T)=\{\lambda\in\mathbb{C},\,\, T-\lambda \mbox{ is  not  generalized  Drazin} \}$$
    $$\sigma_{D}(T)=\{\lambda\in\mathbb{C},\,\, T-\lambda \mbox{  is  not  Drazin  invertible} \}$$

In \cite{KMB}, the authors introduced  and studied  a new
concept of left and right generalized Drazin inverse of bounded operators. In fact,  an operator $T\in\mathcal{B}(X)$ is said to be left generalized Drazin invertible if $H_0(T)$ is closed and complemented with a subspace $M$ in $X$ such that $ T(M)$ is closed which equivalent to $T =T_1\oplus T_2$ such that $T_1$ is left invertible and $T_2$ is quasi-nilpotent see \cite[Proposition 3.2]{KMB}.\\
An operator $T\in\mathcal{B}(X)$ is said to be right generalized Drazin invertible if $K(T)$ is closed and complemented with a subspace $N$ in $X$ such that $ N\subset H_0(T)$  which equivalent to $T =T_1\oplus T_2$ such that $T_1$ is right  invertible and $T_2$ is quasi-nilpotent see \cite[Proposition 3.4]{KMB}.\\

The  left  and right generalized Drazin  spectra   of $T\in \mathcal{B}(X)$ are  defined by:
    $$\sigma_{lgD}(T)=\{\lambda\in\mathbb{C},\,\, T-\lambda \mbox{  is  not left generalized Drazin} \}$$
    $$\sigma_{rgD}(T)=\{\lambda\in\mathbb{C},\,\, T-\lambda  \mbox{  is  not right generalized  Drazin}\}$$
And we have $$\sigma_{gK}(T)\subset\sigma_{lgD}(T)\cap\sigma_{rgD}(T)\subset\sigma_{gD}(T)=\sigma_{lgD}(T)\cup\sigma_{rgD}(T)$$
The aim of this paper is to  present  the relationship   between  $\sigma_{gK}(.)$ and $\sigma_{gD}(.)$ and we apply this results to same classes of operator as multipliers  and supercyclic operators. Finally, we prove that  if $A$ and  $B$ are decomposable, then  for every  $C\in\mathcal{B}(Y,X)$ we have :
\begin{center}
$\sigma_{gK}(M_C)=\sigma_{gK}(A)\cup\sigma_{gK}(B)$\,\, where $M_C=\begin{pmatrix}
A & B \\
0 & C \\
\end{pmatrix}$
\end{center}

\section{SVEP and Pseudo Fredholm Spectrum}
\begin{lemma}\label{l1}
Let $T\in \mathcal{B}(X)$, has SVEP at $\lambda\in\mathbb{C}$. Then :
\begin{center}
    $T-\lambda$ is bounded below if  and only if $T-\lambda$ is semi regular
\end{center}
\end{lemma}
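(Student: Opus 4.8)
The plan is to dispose of the trivial implication first and then concentrate on the substantive one. For the forward direction, assume $T-\lambda$ is bounded below. Then $N(T-\lambda)=\{0\}$ and $R(T-\lambda)$ is closed, so the inclusion $N(T-\lambda)\subseteq R^{\infty}(T-\lambda)$ holds vacuously and $T-\lambda$ is semi-regular; this direction does not even use SVEP. Replacing $T$ by $T-\lambda$, I may assume $\lambda=0$ throughout, so that the remaining task is: if $T$ is semi-regular and has SVEP at $0$, then $T$ is bounded below. Since semi-regularity already guarantees that $R(T)$ is closed, it suffices to prove that $T$ is injective.

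I would argue by contraposition, constructing a local analytic solution that violates SVEP whenever $N(T)\neq\{0\}$. Fix $0\neq x_{0}\in N(T)$. Because $T$ is semi-regular, $x_{0}\in N(T)\subseteq R^{\infty}(T)=\bigcap_{n}R(T^{n})$. The first key point is purely algebraic: if $y\in R^{\infty}(T)$ and $Tx=y$, then $x\in R^{\infty}(T)$. Indeed, for each $n$ there is $w_{n}\in R(T^{n})$ with $Tw_{n}=y$, whence $x-w_{n}\in N(T)\subseteq R^{\infty}(T)\subseteq R(T^{n})$ and therefore $x=w_{n}+(x-w_{n})\in R(T^{n})$; as $n$ is arbitrary, $x\in R^{\infty}(T)$. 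Thus $T$ maps $R^{\infty}(T)$ onto itself.

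The second key point is the norm control. Since $R(T)$ is closed, the reduced minimum modulus $\gamma(T)=\inf\{\|Tx\|/\operatorname{dist}(x,N(T)):x\notin N(T)\}$ is strictly positive, so every $y\in R(T)$ admits a preimage $x$ with $Tx=y$ and $\|x\|\leq C\|y\|$, where $C=2/\gamma(T)$; by the previous paragraph, when $y\in R^{\infty}(T)$ this preimage lies again in $R^{\infty}(T)$. Starting from $u_{0}=x_{0}$ and iterating, I obtain a sequence $(u_{n})$ in $R^{\infty}(T)$ with $Tu_{n}=u_{n-1}$ and $\|u_{n}\|\leq C^{n}\|x_{0}\|$. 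The series $f(z)=\sum_{n\geq 0}z^{n}u_{n}$ then converges on the disc $|z|<1/C$ and defines an analytic $X$-valued function there with $f(0)=x_{0}\neq 0$. Since $Tu_{0}=0$, a term-by-term computation gives $Tf(z)=\sum_{n\geq 1}z^{n}u_{n-1}=zf(z)$, that is $(T-z)f(z)=0$ on a neighbourhood of $0$, contradicting SVEP at $0$. Therefore $N(T)=\{0\}$ and $T$ is bounded below.

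The main obstacle is the construction in the last two paragraphs: combining the algebraic stability of $R^{\infty}(T)$ under solving $Tx=y$ with the quantitative open-mapping bound supplied by $\gamma(T)>0$, so that the resulting sequence both stays inside $R^{\infty}(T)$ (allowing the iteration to continue) and grows at most geometrically (guaranteeing a positive radius of convergence). Once these two ingredients are in place, the power series solution and the verification that $(T-z)f(z)=0$ are routine.
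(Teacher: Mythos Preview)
Your argument is correct. The forward implication is handled identically to the paper, and for the converse you give a self-contained construction of a nontrivial analytic solution of $(T-z)f(z)=0$ near $0$, exploiting semi-regularity in two ways: algebraically ($T^{-1}(R^{\infty}(T))\subseteq R^{\infty}(T)$ via $N(T)\subseteq R^{\infty}(T)$) and metrically ($\gamma(T)>0$ to force geometric growth of the coefficients). Both points are verified cleanly; in particular, your observation that \emph{every} preimage of a vector in $R^{\infty}(T)$ already lies in $R^{\infty}(T)$ is exactly what makes the combination of the two ingredients painless.

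The paper's own proof is much shorter because it invokes a ready-made black box: it notes that for a semi-regular operator $R^{\infty}(T-\lambda)=K(T-\lambda)$, so $N(T-\lambda)\neq\{0\}$ forces $N(T-\lambda)\cap K(T-\lambda)\neq\{0\}$, and then quotes \cite[Theorem~2.22]{Aie} to conclude that SVEP at $\lambda$ fails. Your construction is precisely the machinery that lives inside that citation (building a local power-series section from a chain $Tu_{n}=u_{n-1}$ with controlled norms is how one proves that $K(T)\cap N(T)\neq\{0\}$ obstructs SVEP). So the two proofs share the same core idea; the paper outsources it to Aiena, while you carry it out explicitly. The advantage of your version is that it is self-contained and makes transparent exactly where closedness of $R(T)$ and the inclusion $N(T)\subseteq R^{\infty}(T)$ are used; the advantage of the paper's version is brevity and a direct link to the standard local-spectral-theory toolkit.
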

\begin{proof}
We have $T-\lambda$  bounded below  implies that $T-\lambda$ is semi regular.\\
Conversely, if $T-\lambda$ is semi regular   then $R(T-\lambda)$ is closed. Suppose that $T-\lambda$ is not injective then $N(T-\lambda)\neq\{0\}$ since  $T-\lambda$ is semi regular then  $\{0\}\neq N(T-\lambda)\subseteq R^{\infty}(T-\lambda)=K(T-\lambda)$,  hence $N(T-\lambda)\cap K(T-\lambda)\neq\{0\}$, this contradict that $T$ has the SVEP at $ \lambda$ (see \cite[Theorem 2.22]{Aie}. Therefore $T-\lambda$ is bonded below.
\end{proof}
By duality we have
\begin{lemma}\label{l2}
Let $T\in \mathcal{B}(X)$, suppose that $T^*$  has SVEP at $\lambda\in\mathbb{C}$. Then :
\begin{center}
    $T-\lambda$ is surjective if  and only if $T-\lambda$ is semi regular
\end{center}
\end{lemma}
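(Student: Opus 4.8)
The plan is to reduce the statement to Lemma~\ref{l1} applied to the adjoint $T^*$, using the two standard dualities between an operator and its adjoint. First I would dispose of the easy implication directly, with no use of the hypothesis: if $T-\lambda$ is surjective then $R(T-\lambda)=X$ is trivially closed, and since $R((T-\lambda)^n)=X$ for every $n$ we get $R^{\infty}(T-\lambda)=X$, which certainly contains $N(T-\lambda)$. Hence $T-\lambda$ is semi regular.

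For the converse I would exploit duality. The two ingredients are: (i) semi-regularity is self-dual, i.e. $T-\lambda$ is semi regular if and only if $(T-\lambda)^*=T^*-\lambda$ is semi regular; and (ii) surjectivity and bounded-belowness are dual, i.e. $T-\lambda$ is surjective if and only if $T^*-\lambda$ is bounded below. Both facts are classical and may be found in \cite{Aie}. Assuming then that $T-\lambda$ is semi regular, (i) gives that $T^*-\lambda$ is semi regular; since $T^*$ has SVEP at $\lambda$ by hypothesis, Lemma~\ref{l1} applied to $T^*$ yields that $T^*-\lambda$ is bounded below; finally (ii) converts this back into surjectivity of $T-\lambda$. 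This three-step translation through the adjoint is the whole content of the proof.

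The main obstacle is not deep but lies in justifying the two duality statements, in particular that semi-regularity passes to the adjoint. This rests on the annihilator relations between the kernel and range of an operator and those of its adjoint, together with the closed-range theorem (a range is closed if and only if the adjoint's range is closed); these are precisely what let one transfer the inclusion $N(T-\lambda)\subseteq R^{\infty}(T-\lambda)$ into the corresponding inclusion for $T^*-\lambda$. Once these are taken as known from \cite{Aie}, no hypothesis beyond SVEP of $T^*$ at $\lambda$ is needed, and the argument closes cleanly by symmetry with Lemma~\ref{l1}.
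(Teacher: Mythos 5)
Your proof is correct and is exactly the argument the paper intends: the paper gives no written proof of this lemma, stating only ``By duality we have,'' and your three-step reduction (self-duality of semi-regularity, duality of surjectivity with bounded-belowness of the adjoint, then Lemma~\ref{l1} applied to $T^*$) is precisely that duality argument made explicit. The two classical facts you invoke are indeed in \cite{Aie}, so nothing is missing.
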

\begin{lemma}\label{l3}
Let $T\in\mathcal{B}(X)$. Then:
\begin{center}
    $S(T)\subset \sigma_{lgD}(T)$ and  $S(T^*)\subset \sigma_{rgD}(T)$.
\end{center}
\end{lemma}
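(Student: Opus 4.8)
The plan is to establish both inclusions by contraposition, reducing each to the single-valued extension property of the two pieces of a direct-sum decomposition. Recall that $T$ has SVEP at $\lambda$ precisely when $T-\lambda$ has SVEP at $0$, and that SVEP at a fixed point is inherited by, and recovered from, the direct summands of an operator (a local analytic solution of $(S\oplus R-z)f=0$ splits into its two components). So in each case it will be enough to exhibit the appropriate decomposition and check SVEP at $0$ for each summand.

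For $S(T)\subset\sigma_{lgD}(T)$ I would take $\lambda\notin\sigma_{lgD}(T)$ and prove $\lambda\notin S(T)$. By \cite[Proposition 3.2]{KMB} the hypothesis yields a decomposition $T-\lambda=T_1\oplus T_2$ on $X=M\oplus N$ with $T_1$ left invertible and $T_2$ quasi-nilpotent. Being left invertible, $T_1$ is injective, so its ascent $a(T_1)=0$ is finite and hence $T_1$ has SVEP at $0$. Being quasi-nilpotent, $T_2$ satisfies $\sigma(T_2)=\{0\}$, so $0\in\partial\sigma(T_2)$ and $T_2$ has SVEP at $0$ by the boundary fact recalled in the preliminaries. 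Therefore $T-\lambda=T_1\oplus T_2$ has SVEP at $0$, i.e. $T$ has SVEP at $\lambda$, which is exactly $\lambda\notin S(T)$.

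The inclusion $S(T^*)\subset\sigma_{rgD}(T)$ follows by the same scheme applied to adjoints. If $\lambda\notin\sigma_{rgD}(T)$ then \cite[Proposition 3.4]{KMB} gives $T-\lambda=T_1\oplus T_2$ with $T_1$ right invertible and $T_2$ quasi-nilpotent; dualising, $T^*-\lambda=(T-\lambda)^*=T_1^*\oplus T_2^*$, where $T_1^*$ is left invertible (a right inverse of $T_1$ dualises to a left inverse of $T_1^*$) and $T_2^*$ is quasi-nilpotent because $\sigma(T_2^*)=\sigma(T_2)=\{0\}$. As before each summand has SVEP at $0$, so $T^*$ has SVEP at $\lambda$ and $\lambda\notin S(T^*)$. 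The argument is essentially routine once the two structure theorems are invoked; the only steps demanding attention are the passage of SVEP through the direct sum and, in the dual case, the correct identification $T^*-\lambda=T_1^*\oplus T_2^*$ with $T_1^*$ left invertible, which I expect to be the main, though minor, obstacle.
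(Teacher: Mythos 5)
Your argument is correct, but it follows a different route from the paper's. You invoke the direct-sum characterizations of \cite[Propositions 3.2 and 3.4]{KMB}, writing $T-\lambda=T_1\oplus T_2$ with $T_1$ left (resp.\ right) invertible and $T_2$ quasi-nilpotent, check SVEP at $0$ for each summand separately (injectivity, hence finite ascent, for the left-invertible piece; $\sigma(T_2)=\{0\}$ for the quasi-nilpotent piece), and pass SVEP through the direct sum --- dualising the decomposition for the second inclusion. The paper instead works straight from the definitions of left and right generalized Drazin invertibility and quotes the Aiena--Biondi characterizations of localized SVEP \cite[Theorem~1.7]{AP}: closedness of $H_0(T-\lambda)$ forces SVEP of $T$ at $\lambda$, and $K(T-\lambda)+H_0(T-\lambda)=X$ (which follows since the complement $N$ of $K(T-\lambda)$ sits inside $H_0(T-\lambda)$) forces SVEP of $T^*$ at $\lambda$. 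The paper's proof is shorter once that theorem is granted; yours is more self-contained and elementary, at the cost of needing the equivalence with the direct-sum form, the (true but unstated in the paper) converse direction of SVEP-inheritance for direct sums, and the duality step identifying $(T_1\oplus T_2)^*$ with $T_1^*\oplus T_2^*$ and a right inverse of $T_1$ with a left inverse of $T_1^*$ --- all of which you handle correctly.
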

\begin{proof}
Let $\lambda\notin\sigma_{lgD}(T)$ then $T-\lambda$ is a left generalized Drazin invertible  hence $H_0(T-\lambda)$ is closed by \cite[Theorem 1.7]{AP} $T$ has the SVEP at $\lambda$.\\
Let $\lambda\notin\sigma_{rgD}(T)$ then $T-\lambda$ is a right  generalized Drazin invertible  hence $K(T-\lambda)$ is closed  and $K(T-\lambda)\oplus N=X$ where $N\subseteq H_0(T-\lambda)$ then $K(T-\lambda)+H_0(T-\lambda)=X$ by \cite[Theorem 1.7]{AP} $T^*$ has the SVEP at $\lambda$.\\
\end{proof}


\begin{proposition}\label{p1}
Let $T\in\mathcal{B}(X)$. Then:
\begin{center}
    $\sigma_{lgD}(T)=\sigma_{gK}(T)\cup S(T)$
\end{center}
\end{proposition}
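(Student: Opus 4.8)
The plan is to prove the asserted equality by establishing the two inclusions separately, with the reverse inclusion carrying all the weight. For the containment $\sigma_{gK}(T)\cup S(T)\subseteq\sigma_{lgD}(T)$, I would simply combine two facts already on hand: the inclusion $\sigma_{gK}(T)\subseteq\sigma_{lgD}(T)$ recorded in the displayed chain of inclusions in the Introduction, together with $S(T)\subseteq\sigma_{lgD}(T)$ supplied by Lemma \ref{l3}. Taking the union yields this direction at once.

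The substance lies in $\sigma_{lgD}(T)\subseteq\sigma_{gK}(T)\cup S(T)$, which I would prove in contrapositive form. So I assume $\lambda\notin\sigma_{gK}(T)$ and $\lambda\notin S(T)$, meaning $T$ has SVEP at $\lambda$, and I aim to show that $T-\lambda$ is left generalized Drazin invertible. Since $\lambda\notin\sigma_{gK}(T)$, I fix a generalized Kato decomposition $(M,N)$ of $T-\lambda$: a pair of closed $T$-invariant subspaces with $X=M\oplus N$, the restriction $(T-\lambda)_{\shortmid M}$ semi-regular, and $(T-\lambda)_{\shortmid N}$ quasinilpotent.

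The key step is to upgrade semi-regularity on $M$ to bounded below. Because $M$ is a closed $T$-invariant subspace and $T$ has SVEP at $\lambda$, the restriction $T_{\shortmid M}$ inherits SVEP at $\lambda$: any analytic $f$ on a neighbourhood of $\lambda$ valued in $M$ with $(T_{\shortmid M}-zI)f(z)=0$ is a local solution of $(T-zI)f(z)=0$ in $X$, hence $f\equiv 0$. Applying Lemma \ref{l1} to the operator $T_{\shortmid M}$ at $\lambda$, the semi-regular operator $(T-\lambda)_{\shortmid M}$ is therefore bounded below. I would then read off left generalized Drazin invertibility through the quasinilpotent part. A bounded below operator has trivial quasinilpotent part, so $H_0((T-\lambda)_{\shortmid M})=\{0\}$, while a quasinilpotent operator satisfies $H_0((T-\lambda)_{\shortmid N})=N$; since $H_0$ respects the invariant direct sum, $H_0(T-\lambda)=H_0((T-\lambda)_{\shortmid M})\oplus H_0((T-\lambda)_{\shortmid N})=N$. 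Thus $H_0(T-\lambda)=N$ is closed and complemented by $M$, and $(T-\lambda)(M)$ is closed by semi-regularity, so the defining characterization of left generalized Drazin invertibility gives $\lambda\notin\sigma_{lgD}(T)$.

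I expect the main obstacle to be precisely this passage from semi-regular to bounded below on $M$: it requires verifying the hypothesis of Lemma \ref{l1} for the restriction, which rests on the inheritance of SVEP under restriction to a closed invariant subspace, and it must be coupled with the bookkeeping that identifies $H_0(T-\lambda)$ with $N$ via the direct-sum decomposition of the quasinilpotent part. Everything else reduces to invoking the structural characterization of $\sigma_{lgD}$ and the inclusions already established.
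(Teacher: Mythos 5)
Your proposal is correct and follows essentially the same route as the paper: the forward inclusion from Lemma \ref{l3} and the known containment $\sigma_{gK}(T)\subseteq\sigma_{lgD}(T)$, and the reverse direction via a generalized Kato decomposition $(M,N)$, inheritance of SVEP by the restriction to $M$, and Lemma \ref{l1} to upgrade semi-regularity to bounded below. The only difference is that you spell out the final passage to left generalized Drazin invertibility through the identification $H_0(T-\lambda)=N$, a detail the paper leaves implicit.
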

\begin{proof}
 Since  $\sigma_{gK}(T)\subset \sigma_{lgD}(T)$ by  lemma \ref{l3} $\sigma_{gK}(T)\cup S(T)\subset\sigma_{lgD}(T)$\\
 Now, let $\lambda\notin\sigma_{gK}(T)\cup S(T)$, then $T-\lambda$ is a Pseudo Fredholm operator, hence there exists  two $T$- invariant closed subspaces  of $X$, $M$ and $N$ such that $(T-\lambda)_{\shortmid M}$ is semi regular and $ (T-\lambda)_{\shortmid N}$ is quasinilpotent. We have $T$ has the SVEP  at $\lambda$ implies that $T_{\shortmid M}$ and  $T_{\shortmid N}$ have the SVEP at $\lambda$ (see \cite[Theorem 2.9]{Aie}). By Lemma \ref{l1}  $(T-\lambda)_{\shortmid M}$  is bounded below which implies that $T-\lambda$ is left generalized Drazin. This complete the proof.
\end{proof}
\begin{corollary}
Let $T\in\mathcal{B}(X)$, suppose that $T$ has the SVEP. Then:
\begin{center}
    $\sigma_{lgD}(T)=\sigma_{gK}(T)$
\end{center}
\end{corollary}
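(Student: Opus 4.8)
The plan is to obtain this corollary as an immediate specialization of Proposition \ref{p1}. The key observation is that the hypothesis ``$T$ has the SVEP'' means, by definition, that $T$ has the SVEP at every $\lambda\in\mathbb{C}$; equivalently $A(T)=\mathbb{C}$, and hence $S(T)=\mathbb{C}\backslash A(T)=\emptyset$.

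First I would invoke Proposition \ref{p1}, which furnishes the unconditional identity $\sigma_{lgD}(T)=\sigma_{gK}(T)\cup S(T)$ for an arbitrary $T\in\mathcal{B}(X)$. Then I would substitute $S(T)=\emptyset$ into this equation, so that the union collapses to $\sigma_{lgD}(T)=\sigma_{gK}(T)\cup\emptyset=\sigma_{gK}(T)$, which is precisely the asserted equality.

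There is essentially no obstacle here: the whole content resides in Proposition \ref{p1}, and the corollary only requires recognizing that global SVEP forces the failure set $S(T)$ to be empty. One could alternatively verify the two inclusions directly --- the inclusion $\sigma_{gK}(T)\subseteq\sigma_{lgD}(T)$ holds unconditionally, while for the reverse one would take $\lambda\notin\sigma_{gK}(T)$ and combine the generalized Kato decomposition with the SVEP at $\lambda$ to deduce via Lemma \ref{l1} that the semi-regular summand is bounded below, hence $T-\lambda$ is left generalized Drazin invertible. However, this merely reproduces the argument already packaged in Proposition \ref{p1}, so the one-line deduction from the Proposition is the cleaner route.
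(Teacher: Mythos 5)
Your proposal is correct and matches the paper's (implicit) argument exactly: the corollary is stated as an immediate consequence of Proposition \ref{p1}, obtained by noting that global SVEP gives $S(T)=\emptyset$ so the union collapses. Nothing further is needed.
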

By duality we get a similarly result for the right  generalized Drazin spectrum.
\begin{proposition}\label{p2}
Let $T\in\mathcal{B}(X)$. Then:
\begin{center}
    $\sigma_{rgD}(T)=\sigma_{gK}(T)\cup S(T^*)$
\end{center}
\end{proposition}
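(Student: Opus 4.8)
The plan is to mirror the proof of Proposition \ref{p1} verbatim, but dualized: the roles of $T$, Lemma \ref{l1}, and the inclusion $S(T)\subset\sigma_{lgD}(T)$ are taken over by $T^*$, Lemma \ref{l2}, and the inclusion $S(T^*)\subset\sigma_{rgD}(T)$ coming from Lemma \ref{l3}. The equality is proved by two inclusions.

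For the inclusion $\sigma_{gK}(T)\cup S(T^*)\subseteq\sigma_{rgD}(T)$, nothing new is needed: the chain $\sigma_{gK}(T)\subset\sigma_{rgD}(T)$ recorded in the introduction, together with $S(T^*)\subset\sigma_{rgD}(T)$ from Lemma \ref{l3}, gives the claim at once.

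For the reverse inclusion, I would take $\lambda\notin\sigma_{gK}(T)\cup S(T^*)$. Then $T-\lambda$ is Pseudo Fredholm, so there is a GKD $(M,N)$ with $X=M\oplus N$, both summands closed and $T$-invariant, $(T-\lambda)_{\shortmid M}$ semi-regular and $(T-\lambda)_{\shortmid N}$ quasinilpotent. The goal is to upgrade semi-regularity of $(T-\lambda)_{\shortmid M}$ to surjectivity, since then $T-\lambda=(T-\lambda)_{\shortmid M}\oplus(T-\lambda)_{\shortmid N}$ exhibits a right invertible part and a quasinilpotent part, i.e. $K(T-\lambda)=M$ is closed and complemented by $N\subseteq H_0(T-\lambda)$, which is precisely right generalized Drazin invertibility. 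To invoke Lemma \ref{l2} for the restriction $S:=T_{\shortmid M}$ on the Banach space $M$, I need $S^*=(T_{\shortmid M})^*$ to have SVEP at $\lambda$.

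The main obstacle is exactly this transfer of SVEP to the adjoint of the restriction. Because $M$ and $N$ are closed, $T$-invariant and $X=M\oplus N$, the adjoint splits as $X^*=M^\perp\oplus N^\perp$ with both annihilators $T^*$-invariant, and under the canonical identifications $N^\perp\cong M^*$ and $M^\perp\cong N^*$ one has $T^*_{\shortmid N^\perp}\cong(T_{\shortmid M})^*$ and $T^*_{\shortmid M^\perp}\cong(T_{\shortmid N})^*$. Since SVEP at a point passes to direct summands (the fact underlying \cite[Theorem 2.9]{Aie}, here applied to $T^*$), the hypothesis that $T^*$ has SVEP at $\lambda$ forces $(T_{\shortmid M})^*$ to have SVEP at $\lambda$. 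Lemma \ref{l2} then yields that $(T-\lambda)_{\shortmid M}$ is surjective; being surjective on $M$ with complement $N$ it provides the right invertible summand, while $(T-\lambda)_{\shortmid N}$ is quasinilpotent, so $T-\lambda$ is right generalized Drazin invertible and $\lambda\notin\sigma_{rgD}(T)$. This completes the equality $\sigma_{rgD}(T)=\sigma_{gK}(T)\cup S(T^*)$.
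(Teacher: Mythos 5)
Your proposal is correct and follows essentially the same route as the paper: both inclusions are obtained exactly as in the paper's proof, namely $\sigma_{gK}(T)\subset\sigma_{rgD}(T)$ together with Lemma \ref{l3} for one direction, and for the other the GKD $(M,N)$, the transfer of the SVEP of $T^*$ at $\lambda$ to $T^*_{\shortmid N^\perp}\cong(T_{\shortmid M})^*$ via \cite[Theorem 2.9]{Aie}, and Lemma \ref{l2} to upgrade semi-regularity of $(T-\lambda)_{\shortmid M}$ to surjectivity. Your write-up is in fact slightly more careful than the paper's (which contains a typo, concluding ``left'' rather than ``right'' generalized Drazin invertibility), since you spell out the duality identifications and why the resulting decomposition meets the definition of right generalized Drazin invertibility.
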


\begin{proof}
We have  $\sigma_{gK}(T)\subset \sigma_{rgD}(T)$ by  lemma \ref{l3} $\sigma_{gK}(T)\cup S(T^*)\subset\sigma_{rgD}(T)$\\
Let $\lambda\notin\sigma_{gK}(T)$, then $T-\lambda$ is a Pseudo Fredholm operator, hence there exists  two $T$- invariant closed subspaces  of $X$, $M$ and $N$ such that $(T-\lambda)_{\shortmid M}$ is semi regular and $ (T-\lambda)_{\shortmid N}$ is quasinilpotent. Since $T^*$ has the SVEP  at $\lambda$ this implies that $T^*_{\shortmid N^{\perp}}$ and  $T^*_{\shortmid M^{\perp}}$ have the SVEP at $\lambda$ (see \cite[Theorem 2.9]{Aie}). By Lemma \ref{l2}  $(T-\lambda)_{\shortmid M}$  is surjective which implies that $T-\lambda$ is left generalized Drazin.  This complete the proof.
\end{proof}

\begin{corollary}
Let $T\in\mathcal{B}(X)$, suppose that $T^*$ has the SVEP. Then:
\begin{center}
    $\sigma_{rgD}(T)=\sigma_{gK}(T)$
\end{center}
\end{corollary}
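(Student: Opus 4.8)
The plan is to prove the set equality by establishing the two inclusions separately, following the blueprint of Proposition~\ref{p1} but transferring everything to the dual side through Lemma~\ref{l2}. The inclusion $\sigma_{gK}(T)\cup S(T^*)\subset\sigma_{rgD}(T)$ is immediate: the chain $\sigma_{gK}(T)\subset\sigma_{rgD}(T)$ recorded in the introduction and the inclusion $S(T^*)\subset\sigma_{rgD}(T)$ supplied by Lemma~\ref{l3} combine to give the union, with no further work.

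For the reverse inclusion I would argue on the complements. Fix $\lambda\notin\sigma_{gK}(T)\cup S(T^*)$ and aim to show that $T-\lambda$ is right generalized Drazin invertible. Since $\lambda\notin\sigma_{gK}(T)$, there is a GKD $(M,N)$ with $X=M\oplus N$, with $(T-\lambda)_{\shortmid M}$ semi-regular and $(T-\lambda)_{\shortmid N}$ quasinilpotent. Since $\lambda\notin S(T^*)$, the operator $T^*$ has SVEP at $\lambda$, and by the restriction theorem \cite[Theorem 2.9]{Aie} this property passes to $T^*_{\shortmid N^{\perp}}$. The decisive identification is the annihilator decomposition $X^*=N^{\perp}\oplus M^{\perp}$ together with $N^{\perp}\cong M^*$ and $T^*_{\shortmid N^{\perp}}\cong (T_{\shortmid M})^*$, which yields that $\big((T-\lambda)_{\shortmid M}\big)^*$ has SVEP at $0$. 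Applying Lemma~\ref{l2} to the operator $(T-\lambda)_{\shortmid M}$ on the space $M$ then upgrades its semi-regularity to surjectivity.

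It remains to read off right generalized Drazin invertibility from the decomposition, which I would do by computing the analytic core summand by summand: the surjective piece gives $K\big((T-\lambda)_{\shortmid M}\big)=M$, while the quasinilpotent piece gives $K\big((T-\lambda)_{\shortmid N}\big)=\{0\}$, so that $K(T-\lambda)=M$ is closed and complemented by $N$, with $N=H_0\big((T-\lambda)_{\shortmid N}\big)\subseteq H_0(T-\lambda)$; by the characterization from \cite{KMB} this is exactly right generalized Drazin invertibility, i.e. $\lambda\notin\sigma_{rgD}(T)$. The step I expect to be the main obstacle is the dual bookkeeping in the second paragraph: one must correctly match $T^*_{\shortmid N^{\perp}}$ with $(T_{\shortmid M})^*$ rather than with $(T_{\shortmid N})^*$, and track the spectral shift so that SVEP at $\lambda$ for $T^*$ becomes SVEP at $0$ for $\big((T-\lambda)_{\shortmid M}\big)^*$ before Lemma~\ref{l2} can be invoked. (One should also note that the phrase ``left generalized Drazin'' concluding the stated argument ought to read ``right generalized Drazin''.)
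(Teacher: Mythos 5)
Your argument is correct and is essentially the paper's own route: the paper obtains this corollary immediately from Proposition~\ref{p2} by noting that $S(T^*)=\emptyset$ when $T^*$ has SVEP, and your proof simply re-derives Proposition~\ref{p2} in full (with more careful duality bookkeeping via $T^*_{\shortmid N^{\perp}}\cong (T_{\shortmid M})^*$ and an explicit computation of $K(T-\lambda)$) rather than citing it. You also correctly flag the paper's typo where ``left generalized Drazin'' should read ``right generalized Drazin''.
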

\begin{example}
 Let $C_p$ the Cesaro operator  on the classical Hardy space $H^p(\mathcal{D})$, where $\mathcal{D}$ the
open unit disc of $\mathbb{C}$ and $1\leq p< \infty$, is given by:
\begin{center}
    $C_pf(\lambda):=\frac{1}{\lambda} \displaystyle\int_{0}^{\lambda} \frac{f(\zeta)}{1-\zeta} d\zeta$
\end{center}
 $C_p$ has the SVEP whenever $1< p< \infty$ and $\sigma_{gK}(C_p)=\partial\Gamma_p$, $\Gamma_p$ is the closed  disc centered  at $\frac{p}{2}$ with radius $\frac{p}{2}$.
 Then $\sigma_{lgD}(C_p)=\partial\Gamma_p$.

\end{example}
\begin{example}
Let $T$ be defined  on $l^2(\mathbb{N})$ by :
\begin{center}
$T(x_1,x_2,....)=(0, x_1, x_2, x_3,....)$
\end{center}
We have $\sigma_{gD}(T)=\{\lambda\in\mathbb{C}, \,\,\, |\lambda|\leq 1\}$.
Since $T$ has the SVEP then $$\sigma_{gK}(T)=\sigma_{lgD}(T)$$
\end{example}

\begin{theorem}\label{122}

Let $T\in\mathcal{B}(X)$. Then:
\begin{center}
    $\sigma_{gD}(T)=\sigma_{gK}(T)\cup (S(T)\cup S(T^*))$
\end{center}
\end{theorem}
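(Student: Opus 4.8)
The plan is to obtain the claimed identity directly from the two preceding propositions together with the decomposition of the generalized Drazin spectrum recorded in the introduction, so that the entire argument reduces to a set-theoretic manipulation. Recall from the introduction that for every $T\in\mathcal{B}(X)$ one has
$$\sigma_{gD}(T)=\sigma_{lgD}(T)\cup\sigma_{rgD}(T).$$
This means the real work has already been done in Propositions \ref{p1} and \ref{p2}, and the theorem is best viewed as their synthesis rather than as a result requiring fresh machinery.

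First I would substitute the formula $\sigma_{lgD}(T)=\sigma_{gK}(T)\cup S(T)$ from Proposition \ref{p1} and the formula $\sigma_{rgD}(T)=\sigma_{gK}(T)\cup S(T^*)$ from Proposition \ref{p2} into the union displayed above. This yields
$$\sigma_{gD}(T)=\big(\sigma_{gK}(T)\cup S(T)\big)\cup\big(\sigma_{gK}(T)\cup S(T^*)\big).$$
Then, using commutativity, associativity, and idempotence of the union, I would collapse the two copies of $\sigma_{gK}(T)$ into one, which gives precisely $\sigma_{gK}(T)\cup\big(S(T)\cup S(T^*)\big)$, the asserted equality.

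I do not expect any genuine obstacle. The only delicate point—that at a point $\lambda$ where $T-\lambda$ is Pseudo Fredholm and the appropriate SVEP holds, the semi-regular summand is automatically bounded below (resp. surjective)—is exactly the content of Lemmas \ref{l1} and \ref{l2} and was already exploited in proving the two propositions. Accordingly, I would simply invoke the decomposition identity and Propositions \ref{p1}--\ref{p2}, citing them explicitly, and avoid re-deriving the underlying local-spectral facts.
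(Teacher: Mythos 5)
Your proposal is correct and follows essentially the same route as the paper: the paper also derives the theorem from Propositions \ref{p1} and \ref{p2} together with the identity $\sigma_{gD}(T)=\sigma_{lgD}(T)\cup\sigma_{rgD}(T)$, merely phrasing the substitution as a two-inclusion argument rather than a single set-theoretic collapse.
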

\begin{proof}
We have $\sigma_{gD}(T)\supseteq\sigma_{gK}(T)\cup (S(T)\cup S(T^*))$\\
Conversely, let $\lambda\notin \sigma_{gK}(T)\cup (S(T)\cup S(T^*))$, then $\lambda\notin \sigma_{gK}(T)$ and $\lambda\notin  (S(T)\cup S(T^*))$, by Proposition \ref{p1} and Proposition \ref{p2} $\lambda\notin \sigma_{lgD}(T)\cup\sigma_{rgD}(T)$, since $\sigma_{gD}(T)=\sigma_{lgD}(T)\cup\sigma_{rgD}(T)$, then $\lambda\notin\sigma_{gD}(T)$.
\end{proof}
\begin{corollary}\label{c21}
Let $T\in\mathcal{B}(X)$, suppose that $T$ and  $T^*$ have  the SVEP. Then:
\begin{center}
    $\sigma_{gD}(T)=\sigma_{gK}(T)$
\end{center}
\end{corollary}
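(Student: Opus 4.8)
The plan is to read this off directly from Theorem \ref{122}, since the only role played by the two SVEP hypotheses is to annihilate the correction sets $S(T)$ and $S(T^*)$. Recall from the preliminaries that $S(T)=\mathbb{C}\setminus A(T)$, where $A(T)=\{\lambda\in\mathbb{C}: T\text{ has the SVEP at }\lambda\}$. Thus the assertion ``$T$ has the SVEP'' means precisely that $T$ has the SVEP at every $\lambda\in\mathbb{C}$, that is, $A(T)=\mathbb{C}$, which is equivalent to $S(T)=\emptyset$.

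First I would invoke this observation for $T$ itself: the hypothesis that $T$ has the SVEP gives $A(T)=\mathbb{C}$, hence $S(T)=\emptyset$. Then I would apply the identical reasoning to the adjoint: since $T^*$ has the SVEP, we likewise obtain $S(T^*)=\emptyset$. Consequently the union $S(T)\cup S(T^*)$ is empty.

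Finally, substituting $S(T)\cup S(T^*)=\emptyset$ into the conclusion of Theorem \ref{122} yields
$$\sigma_{gD}(T)=\sigma_{gK}(T)\cup\bigl(S(T)\cup S(T^*)\bigr)=\sigma_{gK}(T)\cup\emptyset=\sigma_{gK}(T),$$
which is exactly the desired equality. There is no genuine obstacle here; the entire content already resides in Theorem \ref{122}, and this corollary is merely the special case in which both error terms vanish. The only point deserving any care is the translation between the global phrasing ``$T$ has the SVEP'' and the set-theoretic statement $S(T)=\emptyset$, but this is immediate from the definitions of $A(T)$ and $S(T)$ recorded in the introduction.
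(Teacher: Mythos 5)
Your proof is correct and is exactly the intended derivation: the paper states this as an immediate corollary of Theorem \ref{122}, obtained by observing that the SVEP hypotheses force $S(T)=S(T^*)=\emptyset$. Nothing further is needed.
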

\begin{corollary}
Let $T\in\mathcal{B}(X)$, be decomposable. Then:
\begin{center}
    $\sigma_{gD}(T)=\sigma_{gK}(T)$
\end{center}
\end{corollary}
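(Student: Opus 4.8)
The plan is to reduce this immediately to the preceding results, since decomposability is a strictly stronger hypothesis than what Corollary~\ref{c21} requires. First I would invoke the standard fact, already recorded in the introduction, that if $T$ is decomposable then both $T$ and $T^*$ have the SVEP at every point of $\mathbb{C}$. In the notation of this section, this says precisely that $S(T)=\emptyset$ and $S(T^*)=\emptyset$, since $S(T)$ and $S(T^*)$ are by definition the sets where $T$ and $T^*$ respectively fail to have SVEP.

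With these two equalities in hand, the conclusion follows in one line from Theorem~\ref{122}. Indeed, that theorem gives
\begin{center}
$\sigma_{gD}(T)=\sigma_{gK}(T)\cup\bigl(S(T)\cup S(T^*)\bigr)$,
\end{center}
and substituting $S(T)\cup S(T^*)=\emptyset\cup\emptyset=\emptyset$ collapses the right-hand side to $\sigma_{gK}(T)$. Equivalently, one may simply quote Corollary~\ref{c21}, whose hypothesis (that $T$ and $T^*$ both have the SVEP) is exactly what decomposability supplies.

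There is essentially no obstacle here: the entire content of the corollary lies in the already-cited implication \emph{decomposable} $\Rightarrow$ \emph{$T$ and $T^*$ have SVEP}, which is a classical fact from local spectral theory and is not reproved in this paper. Thus the only thing to verify carefully is that this implication is being applied to both $T$ and its adjoint simultaneously, so that both vanishing conditions $S(T)=S(T^*)=\emptyset$ hold together; once that is noted, the identity $\sigma_{gD}(T)=\sigma_{gK}(T)$ is automatic.
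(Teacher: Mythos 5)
Your proof is correct and matches the paper's intended argument: the paper records in the introduction that decomposability of $T$ implies that both $T$ and $T^*$ have the SVEP, and the corollary then follows immediately from Corollary \ref{c21} (equivalently, from Theorem \ref{122} with $S(T)=S(T^*)=\emptyset$), which is exactly your reduction.
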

\begin{example}
Let $T$ be the unilateral weighted shift on $l^2(\mathbb{N})$ defined by:
$$Te_n=\left\{
  \begin{array}{ll}
    0, &  \hbox{if}\,\,  n=p!\,\, for\,\, some\,\, p\in\mathbb{N}\\
    e_{n+1} &   \hbox{otherwise.}
  \end{array}
\right.$$
The adjoint operator of $T$ is :
$$T^*e_n=\left\{
  \begin{array}{ll}
    0 &  \hbox{if}\,\,  n=0\,\,or\,\, n=p!+1\,\, for\,\, some\,\, p\in\mathbb{N}\\
    e_{n-1} &   \hbox{otherwise.}
  \end{array}
\right.$$
We have $\sigma(T)=\overline{D(0,1)}$ the unit closed  disc.
The point spectrum of $T$ and $T^*$ are : $\sigma_{p}(T)=\sigma_{p}(T^*)=\{0\}$, hence $T$ and $T^*$ have the SVEP.
Then $\sigma_{ap}(T)=\sigma_{su}(T)=\sigma(T)$, hence $\sigma_{ap}(T)$ cluster at every point where $\sigma_{su}(T)$ and $ \sigma_{ap}(T)$ respectively  the surjective and approximative spectrum.\\
From \cite[Theorem 3.5]{JZ}, $\sigma_{gK}(T)=\sigma(T)=\overline{D(0,1)}$\\
According to corollaries, $\sigma_{gD}(T)=\sigma_{lgD}(T)=\sigma_{rgD}(T)=\sigma_{gK}(T)=\overline{D(0,1)}$.\\

\end{example}
In the next proposition, we prove equality up to $\sigma_{des}(T)$ between the  Drazin
spectrum and the generalized Drazin spectrum.
\begin{proposition}\label{PPP}
Let $T\in\mathcal{B}(X)$. Then :
$$\sigma_{D}(T)=\sigma_{gD}(T)\cup\sigma_{des}(T)$$
\end{proposition}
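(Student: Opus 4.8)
The plan is to prove the two inclusions separately. The inclusion $\sigma_{gD}(T)\cup\sigma_{des}(T)\subseteq\sigma_D(T)$ is the routine one, and I would argue contrapositively on each summand. If $T-\lambda$ is Drazin invertible, then by the characterization recalled in the Introduction it splits as $T-\lambda=T_1\oplus T_2$ with $T_1$ invertible and $T_2$ nilpotent; since a nilpotent operator is in particular quasinilpotent, this same splitting witnesses that $T-\lambda$ is generalized Drazin invertible, so $\lambda\notin\sigma_{gD}(T)$. Moreover Drazin invertibility is equivalent to finite ascent and descent, so in particular $d(T-\lambda)<\infty$ and $\lambda\notin\sigma_{des}(T)$. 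Passing to complements gives $\sigma_{gD}(T)\cup\sigma_{des}(T)\subseteq\sigma_D(T)$.

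The content is the reverse inclusion, for which I fix $\lambda\notin\sigma_{gD}(T)\cup\sigma_{des}(T)$ and aim to show $T-\lambda$ is Drazin invertible; after translating I may assume $\lambda=0$, so that $T$ is generalized Drazin invertible with $d(T)<\infty$. Generalized Drazin invertibility provides a decomposition $X=M\oplus N$ into closed $T$-invariant subspaces with $T|_M$ invertible and $T|_N$ quasinilpotent (concretely $M=K(T)$ and $N=H_0(T)$). Because the two summands are $T$-invariant and $M\cap N=\{0\}$, ranges split as $R(T^n)=R((T|_M)^n)\oplus R((T|_N)^n)$, and a short argument shows that $R(T^n)=R(T^{n+1})$ forces each summand to stabilise separately; since the invertible part $T|_M$ has descent $0$, the hypothesis $d(T)<\infty$ yields $d(T|_N)<\infty$.

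The crux, and the step I expect to be the main obstacle, is then the purely spectral lemma that a quasinilpotent operator with finite descent is nilpotent; applying it to $Q:=T|_N$ finishes the proof, since $T=T|_M\oplus Q$ is then a direct sum of an invertible and a nilpotent operator, hence Drazin invertible, so $0\notin\sigma_D(T)$. The natural mechanism for the lemma is to set $q=d(Q)$ and observe that $Q$ maps $R(Q^q)=R(Q^{q+1})$ onto itself, i.e. $Q$ is surjective on its hyper-range; a surjective operator on a nonzero Banach space has bounded-below adjoint and hence spectral radius bounded away from $0$, which is incompatible with quasinilpotence unless the hyper-range is trivial, that is $Q^q=0$. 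The delicate point in making this rigorous is the closedness of $R(Q^q)$, which finite descent alone does not guarantee (the ascent analogue of the lemma is in fact false, as injective quasinilpotent weighted shifts show, so the argument must genuinely use descent/surjectivity). I would circumvent the closedness issue by invoking the classical theory of poles of the resolvent: generalized Drazin invertibility of $T$ is equivalent to $0$ being isolated in, or absent from, $\sigma(T)$, and for an isolated spectral point the standard Riesz characterization states that the point is a pole of the resolvent precisely when the descent is finite. Thus $d(T)<\infty$ upgrades ``isolated point'' to ``pole of order $p$'', which is exactly the condition $a(T)=d(T)=p<\infty$ defining Drazin invertibility, completing the reverse inclusion and hence the equality.
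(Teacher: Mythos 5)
Your proposal is correct and follows essentially the same route as the paper: reduce to $\lambda=0$, use the generalized Drazin splitting $T=T_1\oplus T_2$ with $T_1$ invertible and $T_2$ quasinilpotent, pass the finite descent to $T_2$, and conclude via the classical fact that a quasinilpotent operator with finite descent is nilpotent (which the paper simply cites from Lay--Taylor, and which you correctly identify as the crux and justify through the equivalent Riesz characterization of poles). Your version is in fact more careful than the paper's, spelling out the easy inclusion and the splitting of the descent across the direct sum, and rightly flagging the closedness issue and the failure of the ascent analogue.
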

\begin{proof}
Let $\lambda\notin\sigma_{gD}(T)\cup\sigma_{des}(T)$ without loss of generality, we can assume  that $\lambda=0$, then $T=T_1\oplus T_2$ with $T_1$ is invertible operator and $T_2$ is quasinilpotent. Since $T$ has finite descent then $T_1$ and $T_2$ have finite descent, we have $T_2$ is quasinilpotent with finite descent implies that is a nilpotent operator (see \cite{LT}). Thus $T$ is a Drazin invertible  operator.
\end{proof}
\begin{corollary}
Let $T\in\mathcal{B}(X)$, with finite spectrum $\sigma(T)$. Then :
$$\sigma_{D}(T)=\sigma_{gD}(T)$$
In particular if $T$ is decomposable.
\end{corollary}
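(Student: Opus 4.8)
The plan is to read this off directly from the immediately preceding Proposition \ref{PPP}, which already gives $\sigma_{D}(T)=\sigma_{gD}(T)\cup\sigma_{des}(T)$. Since the inclusion $\sigma_{gD}(T)\subseteq\sigma_{D}(T)$ is automatic (Drazin invertibility is just the special case of generalized Drazin invertibility in which the quasinilpotent summand is nilpotent), the whole equality reduces to the single inclusion $\sigma_{des}(T)\subseteq\sigma_{gD}(T)$ under the hypothesis that $\sigma(T)$ is finite. So the first step is to record this reduction and then to show, for an arbitrary $\lambda\in\sigma_{des}(T)$ (which, after translating, I may take to be $\lambda=0$ exactly as in the proof of Proposition \ref{PPP}), that in fact $\lambda\in\sigma_{gD}(T)$.

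Next I would exploit the finiteness of the spectrum. A finite set has no accumulation points, so $\mathrm{acc}\,\sigma(T)=\emptyset$; by the Koliha characterization recalled in the Introduction ($T-\mu$ is generalized Drazin invertible if and only if $\mu\notin\mathrm{acc}\,\sigma(T)$) this says precisely that $\sigma_{gD}(T)=\emptyset$. Consequently the desired identity $\sigma_{D}(T)=\sigma_{gD}(T)$ is equivalent to $\sigma_{D}(T)=\emptyset$, i.e. to the assertion that $T-\lambda$ is Drazin invertible — equivalently has finite ascent and descent — at each of the finitely many isolated points $\lambda$ of $\sigma(T)$ (for $\lambda\notin\sigma(T)$ there is nothing to prove). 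In short, the statement collapses to: every isolated point of $\sigma(T)$ is a pole of the resolvent.

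The structural tool I would bring in here is the spectral (Riesz) projection $P_{\lambda}$ attached to each isolated point $\lambda$, giving a $T$-invariant splitting $X=R(P_{\lambda})\oplus N(P_{\lambda})$ with $\sigma(T|_{R(P_{\lambda})})=\{\lambda\}$ and $\sigma(T|_{N(P_{\lambda})})=\sigma(T)\setminus\{\lambda\}$. On the complementary summand $T-\lambda$ is invertible, so its ascent and descent there vanish, and the whole question localizes to the summand $R(P_{\lambda})$, on which $(T-\lambda)|_{R(P_{\lambda})}$ is quasinilpotent. Finite descent of $T-\lambda$ is then equivalent to nilpotency of this quasinilpotent restriction, so this is the only thing left to establish.

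This last implication is exactly where I expect the main obstacle to lie, and it is the genuine content of the statement. Finiteness of $\sigma(T)$ makes each spectral point isolated and hence generalized Drazin invertible, but it does not by itself upgrade quasinilpotent to nilpotent, so it does not obviously force finite descent: a quasinilpotent-but-not-nilpotent summand at some $\lambda$ would put $\lambda$ in $\sigma_{des}(T)\setminus\sigma_{gD}(T)$ and break the equality. The hard part will therefore be to verify that the standing hypothesis (finite spectrum, or decomposability in the ``in particular'' clause) actually rules out such a summand — for instance by showing $(T-\lambda)|_{R(P_{\lambda})}$ is nilpotent, which would hold if $T$ is polaroid or if $H_0(T-\lambda)$ is finite-dimensional. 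This is the step I would scrutinize most carefully, since the reduction above is routine but the passage from ``isolated point'' to ``pole'' carries all the weight.
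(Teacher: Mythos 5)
Your reduction is correct, and you have put your finger on exactly the right spot: after Proposition \ref{PPP} the claim is equivalent to $\sigma_{des}(T)\subseteq\sigma_{gD}(T)$, and since a finite spectrum has no accumulation points, $\sigma_{gD}(T)=\emptyset$, so the claim collapses to $\sigma_{des}(T)=\emptyset$, i.e.\ to the assertion that every (isolated) point of $\sigma(T)$ is a pole of the resolvent. But the step you flag as ``the one to scrutinize'' is not merely hard --- it is false, so your proof cannot be completed and the corollary as stated fails. Take $T=V$, the Volterra operator on $L^{2}[0,1]$ (or any quasinilpotent operator that is not nilpotent). Then $\sigma(V)=\{0\}$ is finite, so $\sigma_{gD}(V)=\emptyset$; but $V$ has infinite descent, because a quasinilpotent operator with finite descent is nilpotent (the very fact the paper uses in proving Proposition \ref{PPP}) while $V$, being injective and nonzero, is not nilpotent. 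Hence $0\in\sigma_{des}(V)=\sigma_{D}(V)\neq\emptyset=\sigma_{gD}(V)$. Since $V$ is decomposable (its spectrum is a singleton), the ``in particular'' clause fails as well; note moreover that decomposability does not imply finiteness of the spectrum, so that clause is not even formally a special case of the stated hypothesis.

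For comparison, the paper's own proof is a one-line appeal to \cite[Theorem 1.11]{AP} together with Proposition \ref{PPP}, and it founders on the same point: what is actually needed is $\sigma_{des}(T)=\emptyset$, which holds precisely when $T$ is algebraic (equivalently, when $\sigma(T)$ is a finite set of poles of the resolvent), not merely when $\sigma(T)$ is finite. Under that stronger hypothesis your argument closes immediately: each restriction of $T-\lambda$ to the range of the Riesz projection at $\lambda$ is nilpotent, ascent and descent are finite everywhere, and $\sigma_{D}(T)=\emptyset=\sigma_{gD}(T)$. So your instinct to isolate the passage from ``isolated point'' to ``pole'' was exactly right; the correct conclusion is that the hypothesis of the corollary must be strengthened, not that a cleverer argument is missing.
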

\begin{proof}
Direct consequence to \cite[Theorem 1.11]{AP} and Proposition \ref{PPP}.
\end{proof}

\section{Applications}
A bounded linear operator $T$ is called supercyclic
provided there is some $x \in X$ such that the set $\{\lambda T^n, \,\, \lambda\in\mathbb{C}\,\, ,n=0,1,2,..\}$
is dense in $X$. It is well now that if $T$ is supercyclic then $\sigma_{p}(T^*)=\{0\}$ or $\sigma_{p}(T^*)=\{\alpha\}$
for some nonzero $\alpha\in\mathbb{C}$.  Since an operator with countable point spectrum has SVEP, then we have the following:
\begin{proposition}
Let $T\in\mathcal{B}(X)$, a supercyclic operator. Then :
\begin{center}
    $\sigma_{rgD}(T)=\sigma_{gK}(T)$
\end{center}
\end{proposition}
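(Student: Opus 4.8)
The plan is to reduce the assertion to the single-valued extension property of the adjoint. By Proposition \ref{p2} we already know that $\sigma_{rgD}(T)=\sigma_{gK}(T)\cup S(T^*)$, so the equality to be proved is equivalent to showing $S(T^*)=\emptyset$, that is, that $T^*$ has the SVEP at every point of $\mathbb{C}$. Thus the entire burden of the proof lies in verifying SVEP for $T^*$.

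To this end I would invoke the structure of the point spectrum of the adjoint of a supercyclic operator, as recalled in the statement: if $T$ is supercyclic then $\sigma_p(T^*)$ is either $\{0\}$ or a singleton $\{\alpha\}$ with $\alpha\neq 0$. In particular $\sigma_p(T^*)$ is a finite---hence countable---subset of $\mathbb{C}$, and consequently has empty interior. The next step is the standard local-spectral-theory criterion that an operator has the SVEP at every point that is not an interior point of its point spectrum; since $\sigma_p(T^*)$ has empty interior, this criterion applies at every $\lambda\in\mathbb{C}$ and yields that $T^*$ has the SVEP. Equivalently, one may quote directly the fact, cited in the statement, that an operator with countable point spectrum has the SVEP.

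Having established $S(T^*)=\emptyset$, I would conclude by substituting into Proposition \ref{p2}, namely $\sigma_{rgD}(T)=\sigma_{gK}(T)\cup S(T^*)=\sigma_{gK}(T)$, which is exactly the claimed equality; this is simply the corollary to Proposition \ref{p2} applied to the present situation.

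The argument is essentially a direct application of the earlier results, so I do not anticipate a genuine obstacle. The only point requiring care is the justification that a countable (here in fact finite) point spectrum forces SVEP; this rests on the empty-interior argument above, together with the characterization of $\sigma_p(T^*)$ for supercyclic operators. Both facts are standard and may be cited rather than reproved, so the remaining work is purely the bookkeeping of feeding $S(T^*)=\emptyset$ into Proposition \ref{p2}.
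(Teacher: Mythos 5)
Your proposal is correct and follows essentially the same route as the paper: the paper also observes that $\sigma_p(T^*)$ is a singleton for supercyclic $T$, hence countable, so $T^*$ has the SVEP, and then applies Proposition \ref{p2} (equivalently its corollary) to get $\sigma_{rgD}(T)=\sigma_{gK}(T)\cup S(T^*)=\sigma_{gK}(T)$.
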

Since, Every hyponormal operator T on a Hilbert space has the single valued extension property, we have
\begin{proposition}
Let $T$ a hyponormal operator on a Hilbert space then:
\begin{center}
    $\sigma_{lgD}(T)=\sigma_{gK}(T)$
\end{center}
In particular, If $T$ is auto-adjoint then :     $\sigma_{gD}(T)=\sigma_{gK}(T)$
\end{proposition}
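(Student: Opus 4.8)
The plan is to reduce both assertions to the SVEP-based equalities already established in Section~2, since the substantive work lies there and the present statement is essentially an instantiation of those results for the hyponormal class. Accordingly, I would not attempt any direct analysis of the generalized Kato decomposition of $T$; instead I would treat the proposition as a corollary of the machinery already in place.

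First I would invoke the fact recalled immediately before the statement, namely that every hyponormal operator $T$ on a Hilbert space has the SVEP at every point of $\mathbb{C}$. Granting this, the identity $\sigma_{lgD}(T)=\sigma_{gK}(T)$ follows at once from the corollary to Proposition~\ref{p1}, which asserts precisely that SVEP of $T$ forces $\sigma_{lgD}(T)=\sigma_{gK}(T)$. No further computation is needed for the first equality. For the ``in particular'' clause I would argue that if $T$ is self-adjoint then $T$ is normal, hence hyponormal, so $T$ has the SVEP; moreover $T^{*}=T$, so $T^{*}$ is itself self-adjoint and therefore also has the SVEP. With both $T$ and $T^{*}$ enjoying the SVEP, Corollary~\ref{c21} yields $\sigma_{gD}(T)=\sigma_{gK}(T)$.

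The only point requiring any care---and it is a very mild one---is the passage to $T^{*}$ in the self-adjoint case, but this is immediate from $T^{*}=T$, so there is in fact no genuine obstacle here. The proof is purely a matter of chaining the implications \emph{hyponormal} $\Rightarrow$ \emph{SVEP} and, for the self-adjoint refinement, \emph{self-adjoint} $\Rightarrow$ \emph{SVEP of both $T$ and $T^{*}$}, and then quoting the relevant corollaries. The genuine content was already expended in proving Propositions~\ref{p1} and~\ref{p2} and Theorem~\ref{122}; what remains is verification that the hyponormal and self-adjoint classes satisfy the hypotheses of those statements, which they do for the standard reasons just indicated.
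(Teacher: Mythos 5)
Your proposal is correct and follows essentially the same route the paper intends: hyponormality gives the SVEP of $T$, so the corollary to Proposition~\ref{p1} yields $\sigma_{lgD}(T)=\sigma_{gK}(T)$, and in the self-adjoint case $T^{*}=T$ also has the SVEP, so Corollary~\ref{c21} gives $\sigma_{gD}(T)=\sigma_{gK}(T)$. Nothing further is needed.
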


Let $A$ be a semi-simple commutative Banach algebra.\\
The mapping $T$ $:$ $A\longrightarrow  A$ is said to be a multiplier of $A$ if
$T (x)y = xT (y)$ for all $x, y \in A.$\\
It is well known each multiplier  on $A$  is a continuous  linear operator  and that the set of all multiplier on $A$ is a unital closed commutative  subalgebra of $\mathcal{B}(A)$ \cite[Proposition 4.1.1]{lau}. Also
the semi-simplicity of A implies that every multiplier has the SVEP (see \cite[Proposition 2.2.1]{lau}).
According to  Proposition \ref{p1} we have :
\begin{proposition}
Let $T$ be a multiplier on semi-simple commutative Banach
algebra $A$, then the following assertions are equivalent
\begin{enumerate}
  \item  $T$ is Pseudo-Fredholm .
  \item  $T$ is  left generalized Drazin invertible.
\end{enumerate}
\end{proposition}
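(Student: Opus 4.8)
The plan is to reduce the equivalence to a single application of Proposition \ref{p1}, exploiting the fact—recorded in the paragraph immediately preceding the statement—that the semi-simplicity of $A$ forces every multiplier $T$ on $A$ to have the SVEP (via \cite[Proposition 2.2.1]{lau}). In the notation of Section 2 this says exactly that $S(T)=\emptyset$, since $S(T)$ is by definition the set of points at which $T$ fails to have the SVEP, and here the SVEP holds at every $\lambda\in\mathbb{C}$.

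With this observation the argument is immediate. Proposition \ref{p1} gives the identity $\sigma_{lgD}(T)=\sigma_{gK}(T)\cup S(T)$ for an arbitrary bounded operator; substituting $S(T)=\emptyset$ collapses this to $\sigma_{lgD}(T)=\sigma_{gK}(T)$. Equivalently, one may invoke directly the corollary to Proposition \ref{p1}, which records precisely this spectral identity under the hypothesis of SVEP. Either route yields the equality of the two spectra for our multiplier $T$.

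It then remains only to translate the spectral equality back into the pointwise language of the two listed conditions, evaluated at $\lambda=0$. By the definitions in Section 1, $T$ is Pseudo-Fredholm exactly when $0\notin\sigma_{gK}(T)$, while $T$ is left generalized Drazin invertible exactly when $0\notin\sigma_{lgD}(T)$. Since the two sets coincide, $0$ lies in one if and only if it lies in the other, whence $(1)\Leftrightarrow(2)$.

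As to where the difficulty sits: there is essentially no technical obstacle in the proof itself, because all the substance has been front-loaded into Proposition \ref{p1} and into the cited fact that multipliers on semi-simple commutative Banach algebras enjoy the SVEP. The only point requiring a moment's care is the bookkeeping between the global spectral identity $\sigma_{lgD}(T)=\sigma_{gK}(T)$ and the pointwise assertion of the proposition, namely recognizing that the stated equivalence is exactly the instance of that identity at the single spectral point $\lambda=0$.
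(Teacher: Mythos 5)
Your proof is correct and follows exactly the paper's route: the paper likewise derives this proposition directly from Proposition \ref{p1} together with the fact that semi-simplicity forces every multiplier to have the SVEP (so $S(T)=\emptyset$), giving $\sigma_{lgD}(T)=\sigma_{gK}(T)$ and hence the pointwise equivalence at $\lambda=0$.
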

Now if assume in additional that $A$ is regular and Tauberian (see \cite[Definition 4.9.7]{lau}), then every multiplier $T^*$ has  SVEP. Hence we have the following result,
\begin{proposition}
Let $T$ be a multiplier on semi-simple regular and Tauberian commutative Banach
algebra $A$, then the following assertions are equivalent
\begin{enumerate}
  \item  $T$ is Pseudo-Fredholm .
  \item  $T$ is   generalized Drazin invertible.
\end{enumerate}
\end{proposition}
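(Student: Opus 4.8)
The plan is to deduce this equivalence from the global spectral identity already recorded in Corollary \ref{c21}, which asserts that $\sigma_{gD}(T)=\sigma_{gK}(T)$ whenever both $T$ and $T^{*}$ enjoy the SVEP. Unravelling the definitions from the Introduction, the assertion that $T$ is Pseudo-Fredholm means precisely that $T$ admits a GKD, i.e. $0\notin\sigma_{gK}(T)$, while $T$ is generalized Drazin invertible exactly when $0\notin\sigma_{gD}(T)$. Hence, once the two spectra are shown to coincide, the two membership conditions at the point $0$ coincide as well, and $(1)\Leftrightarrow(2)$ follows immediately.

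First I would isolate the two SVEP inputs supplied by the hypotheses. Semi-simplicity of $A$ forces every multiplier $T$ to have the SVEP at every point of $\mathbb{C}$ (the fact cited from \cite[Proposition 2.2.1]{lau} in the paragraph preceding the statement), so $S(T)=\emptyset$. The additional assumptions that $A$ be regular and Tauberian guarantee, via the remark placed just above the proposition, that the adjoint $T^{*}$ also has the SVEP everywhere, so $S(T^{*})=\emptyset$. These are exactly the two ingredients that feed Propositions \ref{p1} and \ref{p2}.

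With $S(T)=S(T^{*})=\emptyset$ in hand, Theorem \ref{122} collapses to $\sigma_{gD}(T)=\sigma_{gK}(T)\cup\bigl(S(T)\cup S(T^{*})\bigr)=\sigma_{gK}(T)$, which is precisely the content of Corollary \ref{c21}. Evaluating this equality of sets at $\lambda=0$ yields $0\notin\sigma_{gK}(T)\iff 0\notin\sigma_{gD}(T)$, and after the definitional translation above this is exactly the asserted equivalence between $T$ being Pseudo-Fredholm and $T$ being generalized Drazin invertible.

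There is no genuine obstacle in this argument; the entire analytic weight has been front-loaded into Propositions \ref{p1}--\ref{p2} and Theorem \ref{122}. The only points that require care are bookkeeping ones: confirming that the SVEP statements for multipliers and for their adjoints are exactly those licensed by semi-simplicity and by the regular-plus-Tauberian hypotheses respectively, and checking that the two definitional dictionaries, namely that \emph{Pseudo-Fredholm} corresponds to $0\notin\sigma_{gK}(T)$ and \emph{generalized Drazin invertible} corresponds to $0\notin\sigma_{gD}(T)$, are read off correctly from the Introduction.
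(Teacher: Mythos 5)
Your proposal is correct and follows essentially the same route as the paper, which (implicitly) derives the equivalence from the facts that semi-simplicity gives $T$ the SVEP, regularity plus the Tauberian condition gives $T^{*}$ the SVEP, and then Corollary \ref{c21} (i.e.\ Theorem \ref{122} with $S(T)=S(T^{*})=\emptyset$) yields $\sigma_{gD}(T)=\sigma_{gK}(T)$, which evaluated at $0$ is exactly the claimed equivalence.
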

Let  $G$ a locally compact abelian group, with group operation + and Haar measure $\mu$,  let $L^1(G)$ consist of all $\mathbb{C}$-valued
functions on $G$ integrable with respect to Haar measure and $M (G)$ the Banach
algebra of regular complex Borel measures on $G$. We recall that $L^1(G)$ is a regular
semi-simple Tauberian commutative Banach algebra. Then we have the following:

\begin{corollary}
Let $G$ be a locally compact abelian group, $\mu \in M (G)$. Then every convolution operator $T_µ$
$: L^1(G)\longrightarrow L^1(G)$, $T_µ(k) = µ \star k$ is Pseudo Fredholm if and only if is   generalized Drazin invertible.
\end{corollary}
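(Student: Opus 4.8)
The plan is to recognize that the convolution operator $T_\mu$ is a multiplier on $L^1(G)$ and then to invoke the immediately preceding Proposition, which characterizes when a multiplier on a semi-simple, regular, Tauberian commutative Banach algebra is Pseudo Fredholm. Since it is recalled in the paragraph just above that $L^1(G)$ is a regular semi-simple Tauberian commutative Banach algebra, the entire content of the corollary reduces to verifying the multiplier property and then quoting that result.

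First I would check that $T_\mu \in \mathcal{B}(L^1(G))$ and that it is a multiplier. Boundedness is immediate from the submultiplicativity of convolution, $\|\mu \star k\|_1 \leq \|\mu\|\,\|k\|_1$, together with the fact that $L^1(G)$ is a closed ideal in $M(G)$, so that $\mu \star k \in L^1(G)$ whenever $k \in L^1(G)$. For the multiplier identity I would use the associativity and commutativity of the convolution product: for all $k, h \in L^1(G)$,
$$T_\mu(k) \star h = (\mu \star k) \star h = \mu \star (k \star h) = (\mu \star h) \star k = k \star T_\mu(h),$$
which is exactly the defining relation $T_\mu(k)\,h = k\,T_\mu(h)$, read inside the Banach algebra $L^1(G)$. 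Alternatively one may appeal to Wendel's theorem, which identifies the multiplier algebra of $L^1(G)$ with $M(G)$, so that every $\mu \in M(G)$ acts as a multiplier through convolution.

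Second, having established that $T_\mu$ is a multiplier on a semi-simple, regular, Tauberian commutative Banach algebra, I would apply the preceding Proposition verbatim: it yields the equivalence that $T_\mu$ is Pseudo-Fredholm if and only if $T_\mu$ is generalized Drazin invertible, which is precisely the assertion of the corollary.

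There is no serious obstacle here, as the corollary is a direct specialization of the general multiplier result. The only point deserving a moment of care is confirming that convolution by an arbitrary measure $\mu \in M(G)$ genuinely defines a multiplier of $L^1(G)$, which is guaranteed by the ideal structure of $L^1(G)$ inside $M(G)$ and the displayed computation above.
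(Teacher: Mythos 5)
Your proposal is correct and follows exactly the route the paper intends: recognize $T_\mu$ as a multiplier on the semi-simple, regular, Tauberian commutative Banach algebra $L^1(G)$ and apply the preceding proposition. The paper treats this as immediate and gives no separate proof, so your verification of the multiplier property is simply a more explicit version of the same argument.
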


In \cite{KMB} and \cite{ZZ}, The authors proved that if $Q$ is a quasi-nilpotent operator commute with $T$ then: $\sigma_{*}(T+Q)=\sigma_{*}(T)$ where $\sigma_{*}= \sigma_{rgD}, \sigma_{lgD}, \sigma_{gD}$, since $S(T+Q)=S(T)$ where $Q$ is a quasi-nilpotent operator commutes with $T$,  we have the following:
\begin{proposition}
Let $T\in\mathcal{B}(X)$, $Q$ be a  quasi-nilpotent operator which commutes with $T$. Then:
$$\sigma_{gK}(T+Q)\cup S(T)=\sigma_{gK}(T)\cup S(T)$$

\end{proposition}

\section{Generalized Kato Decomposition for Operator Matrices}
Let $X$ and $Y$ be   Banach spaces and $\mathcal{B}(X,Y)$ denote the space of all bounded  linear operator from $X$ to $Y$.\\
For $A\in\mathcal{B}(X)$, $B\in\mathcal{B}(Y)$, we denote by $M_C\in\mathcal{B}(X\oplus Y)$ the operator  defined on $X\oplus Y$ by
$$
\begin{pmatrix}
A & C \\
0 & B \\
\end{pmatrix}
$$
It is well know that, in the case of infinite dimensional, the inclusion $\sigma(M_C)\subset\sigma(A)\cup\sigma(B)$, may be strict.\\
This motivates serval authors to study the defect ($\sigma_{*}(A)\cup\sigma_{*}(B))\setminus \sigma_{*}(M_C)$ where $\sigma_{*}$ runs different  type spectra.\\
In \cite{DS}, they  proved that : $\sigma_{gD}(M_C)\subset\sigma_{gD}(A)\cup\sigma_{gD}(B)$, this inclusion may be strict (see \cite[Example 3.4]{ZZ}.\\
In this section we interested  and motivated by the relationship between $\sigma_{gK}(M_C)$ and $\sigma_{gK}(A)\cup\sigma_{gK}(B)$. We start by the following :
\begin{proposition}\label{pp1}
Let $A\in\mathcal{B}(X)$, $B\in\mathcal{B}(Y)$ and  $C\in\mathcal{B}(Y,X)$. Then: $\sigma_{gK}(M_C)=\sigma_{gK}(A)\cup\sigma_{gK}(B)\Longrightarrow \sigma_{gD}(M_C)=\sigma_{gD}(A)\cup\sigma_{gD}(B)$
\end{proposition}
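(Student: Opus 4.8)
The plan is to run everything through the master identity of Theorem~\ref{122}, namely $\sigma_{gD}(T)=\sigma_{gK}(T)\cup S(T)\cup S(T^*)$, and then to control the two SVEP sets of $M_C$ via the triangular structure. First I would record that the inclusion $\sigma_{gD}(M_C)\subseteq\sigma_{gD}(A)\cup\sigma_{gD}(B)$ holds unconditionally by \cite{DS}; consequently only the reverse inclusion $\sigma_{gD}(A)\cup\sigma_{gD}(B)\subseteq\sigma_{gD}(M_C)$ needs proof, and it is precisely here that the hypothesis must be used.

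Next I would make the reduction explicit. Applying Theorem~\ref{122} to $A$ and to $B$ and inserting the hypothesis $\sigma_{gK}(M_C)=\sigma_{gK}(A)\cup\sigma_{gK}(B)$ gives
$$\sigma_{gD}(A)\cup\sigma_{gD}(B)=\sigma_{gK}(M_C)\cup S(A)\cup S(A^*)\cup S(B)\cup S(B^*),$$
while Theorem~\ref{122} applied to $M_C$ yields $\sigma_{gD}(M_C)=\sigma_{gK}(M_C)\cup S(M_C)\cup S(M_C^*)$. Since $\sigma_{gK}(M_C)\subseteq\sigma_{gD}(M_C)$ trivially, the target inclusion reduces to showing $S(A)\cup S(A^*)\cup S(B)\cup S(B^*)\subseteq\sigma_{gD}(M_C)$. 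Two of these four terms are immediate from the structure of $M_C$: the subspace $X\oplus\{0\}$ is $M_C$-invariant with $M_C$ restricting to $A$ on it, so SVEP of $M_C$ at $\lambda$ descends to $A$ and hence $S(A)\subseteq S(M_C)\subseteq\sigma_{gD}(M_C)$; dually $\{0\}\oplus Y^*$ is $M_C^*$-invariant with $M_C^*$ restricting to $B^*$, giving $S(B^*)\subseteq S(M_C^*)\subseteq\sigma_{gD}(M_C)$.

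The hard part will be the two remaining cross terms $S(A^*)$ and $S(B)$, because $A^*$ and $B$ occur only as the \emph{quotient} actions of $M_C^*$ and $M_C$, and SVEP does not pass to quotients by a mere restriction argument. I would treat these contrapositively: assuming $\lambda\notin\sigma_{gD}(M_C)$, so that $M_C-\lambda$ is Pseudo-Fredholm and both $M_C$ and $M_C^*$ have SVEP at $\lambda$, the hypothesis forces $\lambda\notin\sigma_{gK}(A)\cup\sigma_{gK}(B)$, so $A-\lambda$ and $B-\lambda$ are themselves Pseudo-Fredholm. The restriction facts above already yield $\lambda\notin\sigma_{lgD}(A)$ (via Proposition~\ref{p1} and Lemma~\ref{l1}) and $\lambda\notin\sigma_{rgD}(B)$ (via Proposition~\ref{p2} and Lemma~\ref{l2}). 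What remains is to push the semiregular part of $M_C-\lambda$ \emph{down} to $B$ and \emph{up} to $A^*$, i.e. to force SVEP of $B$ at $\lambda$ and of $A^*$ at $\lambda$, so as to obtain $\lambda\notin\sigma_{lgD}(B)$ and $\lambda\notin\sigma_{rgD}(A)$ and hence $\lambda\notin\sigma_{gD}(A)\cup\sigma_{gD}(B)$.

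The genuine obstacle is exactly this transfer of SVEP for the quotient entries, and the equal-GKD assumption is the ingredient that should make it work: under $\sigma_{gK}(M_C)=\sigma_{gK}(A)\cup\sigma_{gK}(B)$ the GKD of $M_C-\lambda$ must be compatible with the diagonal, so that its semiregular summand projects onto semiregular behaviour of $B-\lambda$ (resp. $A^*-\lambda$). I would isolate this compatibility as a separate lemma relating the SVEP of $M_C$ and $M_C^*$ to that of the diagonal entries under the hypothesis, and then the equality of the generalized Drazin spectra follows by combining it with the set-theoretic reduction above.
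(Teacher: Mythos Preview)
Your reduction via Theorem~\ref{122} and the unconditional inclusion $\sigma_{gD}(M_C)\subseteq\sigma_{gD}(A)\cup\sigma_{gD}(B)$ from \cite{DS} is exactly how the paper begins, and you correctly isolate $S(A^*)$ and $S(B)$ as the two delicate terms. The paper's route then diverges from yours in how it disposes of $B$: rather than transferring SVEP to $B$ and $B^*$, the paper first obtains $\lambda\notin\sigma_{gD}(A)$ (from $\lambda\notin\sigma_{gK}(A)$ together with the asserted inclusion $S(A)\cup S(A^*)\subset S(M_C)\cup S(M_C^*)$), and then invokes the two-out-of-three result \cite[Lemma~2.4]{ZZL}, to the effect that if $M_C-\lambda$ and $A-\lambda$ are both generalized Drazin invertible then so is $B-\lambda$. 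This bypasses any SVEP analysis of $B$ or $B^*$ entirely and halves your ``hard part''. What remains in either approach is the quotient-side inclusion $S(A^*)\subset S(M_C)\cup S(M_C^*)$; the paper simply asserts it, whereas you (rightly) flag it as the step requiring its own argument. Your proposed compatibility lemma is therefore not a detour but precisely the content the paper leaves implicit; you could, however, spare yourself its $B$-half by importing \cite[Lemma~2.4]{ZZL} as the paper does.
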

\begin{proof}
Let $\lambda\notin\sigma_{gD}(M_C)$, then $\lambda\notin\sigma_{gK}(M_C)=\sigma_{gK}(A)\cup\sigma_{gK}(B)$ this implies $\lambda\notin\sigma_{gK}(A)$ and $\lambda\notin\sigma_{gK}(B)$. Suppose that $\lambda\in\sigma_{gD}(A)$, from Theorem \ref{122} $\lambda\in\ S(A)\cup\ S(A^*)$. Then $S(A)\cup S(A^*)\subset S(M_C)\cup S(M^*_{C})\subset\sigma_{gD}(M_C)$. This contradict  that  $\lambda\notin\sigma_{gD}(M_C)$. Hence $\lambda\notin\sigma_{gD}(A)$. According to \cite[Lemmma 2.4]{ZZL}, $\lambda\notin\sigma_{gD}(B)$. Thus $\lambda\notin \sigma_{gD}(A)\cup\sigma_{gD}(B)$. We conclude that $\sigma_{gD}(A)\cup\sigma_{gD}(B)\subset \sigma_{gD}(M_C)$. Since $\sigma_{gD}(M_C)\subset\sigma_{gD}(A)\cup\sigma_{gD}(B)$. This complete the proof.
\end{proof}
 Proposition \ref{pp1} and \cite[Proposition 3.12]{ZZ} gives the following:
\begin{corollary}
Let $A\in\mathcal{B}(X)$, $B\in\mathcal{B}(Y)$ and  $C\in\mathcal{B}(Y,X)$. Then: $\sigma_{gK}(M_C)=\sigma_{gK}(A)\cup\sigma_{gK}(B)\Longrightarrow \sigma(M_C)=\sigma(A)\cup\sigma(B)$
\end{corollary}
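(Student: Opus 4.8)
The plan is to chain together the implication established in Proposition \ref{pp1} with the known result on the ordinary spectrum from \cite[Proposition 3.12]{ZZ}. The statement to prove is purely an implication: assuming $\sigma_{gK}(M_C)=\sigma_{gK}(A)\cup\sigma_{gK}(B)$, derive $\sigma(M_C)=\sigma(A)\cup\sigma(B)$. Since Proposition \ref{pp1} already converts the hypothesis on the generalized Kato spectrum into the corresponding statement for the generalized Drazin spectrum, the entire task reduces to connecting $\sigma_{gD}$-decomposability to $\sigma$-decomposability.

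First I would apply Proposition \ref{pp1} directly: the hypothesis $\sigma_{gK}(M_C)=\sigma_{gK}(A)\cup\sigma_{gK}(B)$ immediately yields $\sigma_{gD}(M_C)=\sigma_{gD}(A)\cup\sigma_{gD}(B)$. This is just invoking the previously proved proposition with no extra work. Then I would quote \cite[Proposition 3.12]{ZZ}, which (as signalled by the sentence preceding this corollary in the excerpt) must state that the equality $\sigma_{gD}(M_C)=\sigma_{gD}(A)\cup\sigma_{gD}(B)$ implies $\sigma(M_C)=\sigma(A)\cup\sigma(B)$. Composing these two implications gives the conclusion.

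The whole argument is therefore a two-line composition of cited results, and I would write it essentially as: by Proposition \ref{pp1} the hypothesis gives $\sigma_{gD}(M_C)=\sigma_{gD}(A)\cup\sigma_{gD}(B)$, and then \cite[Proposition 3.12]{ZZ} gives $\sigma(M_C)=\sigma(A)\cup\sigma(B)$. There is no genuine obstacle to overcome here, since the corollary is explicitly framed as a direct consequence of Proposition \ref{pp1} together with the external reference; the heavy lifting was already done inside Proposition \ref{pp1} (which in turn rested on Theorem \ref{122} and on \cite[Lemma 2.4]{ZZL}). If anything, the only point requiring care is verifying that the direction of the implication in \cite[Proposition 3.12]{ZZ} matches what is needed — namely that $\sigma_{gD}$-additivity implies $\sigma$-additivity rather than the reverse. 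I would confirm that orientation before finalizing, but given the structure of the section this is the intended reading.

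A clean writeup of the proof:

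\begin{proof}
Assume $\sigma_{gK}(M_C)=\sigma_{gK}(A)\cup\sigma_{gK}(B)$. By Proposition \ref{pp1} this implies $\sigma_{gD}(M_C)=\sigma_{gD}(A)\cup\sigma_{gD}(B)$. Applying \cite[Proposition 3.12]{ZZ} to this last equality yields $\sigma(M_C)=\sigma(A)\cup\sigma(B)$, which completes the proof.
\end{proof}
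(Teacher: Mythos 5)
Your proof is correct and is exactly the argument the paper intends: the corollary is stated as an immediate consequence of Proposition \ref{pp1} combined with \cite[Proposition 3.12]{ZZ}, which is precisely the two-step composition you wrote. Your added remark about checking the orientation of the implication in the cited result is a reasonable precaution but does not change the substance.
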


\begin{theorem}
Let $A\in\mathcal{B}(X)$, $B\in\mathcal{B}(Y)$. If $A$, $A^*$, $B$ and $B^*$ have the SVEP, then  for every  $C\in\mathcal{B}(Y,X)$ we have:
\begin{center}
    $\sigma_{gK}(M_C)=\sigma_{gK}(A)\cup\sigma_{gK}(B)$
\end{center}
\end{theorem}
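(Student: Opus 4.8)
The plan is to reduce the statement about the operator matrix $M_C$ to the already-proven Theorem \ref{122}, by showing that under the four SVEP hypotheses the generalized Kato spectrum of $M_C$ coincides with the generalized Drazin spectrum of $M_C$, and that the latter decomposes as the union of the diagonal spectra. The central observation is that SVEP of the diagonal entries propagates to $M_C$: specifically, if $A$ and $B$ both have SVEP, then $M_C$ has SVEP for every $C$, and dually if $A^*$ and $B^*$ have SVEP then $M_C^*$ has SVEP. This is because $M_C^*$ is similar (via the flip on $X^* \oplus Y^*$) to the lower-triangular matrix with diagonal $A^*, B^*$, so the same propagation result applies on the dual side.

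First I would invoke Corollary \ref{c21}: since $A, A^*$ have SVEP we get $\sigma_{gD}(A) = \sigma_{gK}(A)$, and since $B, B^*$ have SVEP we get $\sigma_{gD}(B) = \sigma_{gK}(B)$. Next I would establish that $M_C$ and $M_C^*$ both have SVEP everywhere; granting the propagation lemma for upper-triangular matrices, $S(M_C) = S(M_C^*) = \emptyset$. Applying Corollary \ref{c21} to $M_C$ itself then yields
\begin{center}
$\sigma_{gK}(M_C) = \sigma_{gD}(M_C)$.
\end{center}
At this stage the problem is entirely transferred to the generalized Drazin spectrum of $M_C$.

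The remaining step is to compute $\sigma_{gD}(M_C)$. I would use the known inclusion $\sigma_{gD}(M_C) \subseteq \sigma_{gD}(A) \cup \sigma_{gD}(B)$ (cited from \cite{DS} in the excerpt) for one direction. For the reverse inclusion I would argue as in Proposition \ref{pp1}: if $\lambda \notin \sigma_{gD}(M_C)$, then $\lambda \notin \sigma_{gK}(M_C) = \sigma_{gD}(M_C)$, hence $\lambda \notin \sigma_{gK}(A) \cup \sigma_{gK}(B)$ by the monotonicity of the generalized Kato spectrum under the triangular structure; combining this with $\sigma_{gD}(A) = \sigma_{gK}(A)$ and $\sigma_{gD}(B) = \sigma_{gK}(B)$ and \cite[Lemma 2.4]{ZZL} gives $\lambda \notin \sigma_{gD}(A) \cup \sigma_{gD}(B)$. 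Assembling both directions yields $\sigma_{gD}(M_C) = \sigma_{gD}(A) \cup \sigma_{gD}(B) = \sigma_{gK}(A) \cup \sigma_{gK}(B)$, and chaining with the equality $\sigma_{gK}(M_C) = \sigma_{gD}(M_C)$ completes the proof.

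The main obstacle I anticipate is the SVEP-propagation claim for $M_C$ and $M_C^*$, which is the one input not explicitly isolated as a lemma in the excerpt. The forward direction (SVEP of $A$ and $B$ implies SVEP of $M_C$) is a standard fact: given a local analytic solution $f = (f_1, f_2)$ of $(M_C - \lambda)f = 0$, the second coordinate gives $(B-\lambda)f_2 = 0$, so SVEP of $B$ forces $f_2 \equiv 0$, and then the first coordinate reduces to $(A-\lambda)f_1 = 0$, so SVEP of $A$ forces $f_1 \equiv 0$. The dual direction requires the identification of $M_C^*$ with an upper-triangular matrix having diagonal $B^*, A^*$ after reversing the order of the summands, to which the same coordinate-chasing argument applies. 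I would want to verify carefully that this propagation holds at \emph{every} $\lambda$ (not merely at isolated points) so that $S(M_C) = S(M_C^*) = \emptyset$ holds globally, since the application of Corollary \ref{c21} needs SVEP of $M_C$ and $M_C^*$ throughout $\mathbb{C}$.
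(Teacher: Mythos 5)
Your overall strategy is the same as the paper's: propagate SVEP from $A$, $A^*$, $B$, $B^*$ to $M_C$ and $M_C^*$ (the paper cites \cite[Proposition 3.1]{HZ}; your coordinate-chasing argument is the standard proof of that fact and is fine), apply Corollary \ref{c21} to convert every $\sigma_{gK}$ into a $\sigma_{gD}$, and then compute $\sigma_{gD}(M_C)$. The inclusion $\sigma_{gD}(M_C)\subseteq\sigma_{gD}(A)\cup\sigma_{gD}(B)$ that you take from \cite{DS} is also fine.

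The gap is in your reverse inclusion $\sigma_{gD}(A)\cup\sigma_{gD}(B)\subseteq\sigma_{gD}(M_C)$. You pass from $\lambda\notin\sigma_{gK}(M_C)$ to $\lambda\notin\sigma_{gK}(A)\cup\sigma_{gK}(B)$ ``by monotonicity of the generalized Kato spectrum under the triangular structure.'' No such monotonicity is available: the containment $\sigma_{gK}(A)\cup\sigma_{gK}(B)\subseteq\sigma_{gK}(M_C)$ is precisely the nontrivial half of the equality the theorem asserts, so invoking it here is circular. (Even for the ordinary spectrum the analogous containment fails --- $\sigma(M_C)$ can be strictly smaller than $\sigma(A)\cup\sigma(B)$ --- which is the whole reason this section of the paper exists.) Nor can you ``argue as in Proposition \ref{pp1}'': that proposition takes the equality $\sigma_{gK}(M_C)=\sigma_{gK}(A)\cup\sigma_{gK}(B)$ as a \emph{hypothesis}, exactly the statement you are trying to prove. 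The paper closes this step by citing \cite[Corollary 3.6]{ZZ}, which gives $\sigma_{gD}(M_C)=\sigma_{gD}(A)\cup\sigma_{gD}(B)$ under the SVEP hypotheses. Alternatively, one can note that the SVEP assumptions force $\sigma(M_C)=\sigma(A)\cup\sigma(B)$ for every $C$, and since $\sigma_{gD}(T)=\mathrm{acc}\,\sigma(T)$ and $\mathrm{acc}(E\cup F)=\mathrm{acc}\,E\cup\mathrm{acc}\,F$, the reverse inclusion for $\sigma_{gD}$ follows. Some input of this kind is genuinely needed; the step as you wrote it does not stand on its own.
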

\begin{proof}
 $A$, $A^*$, $B$ and $B^*$ have the SVEP  according to \cite[Proposition 3.1]{HZ},  $M_C$ and $M^*_C$ have the SVEP. Hence  by corollary \ref{c21}
$$\sigma_{gK}(M_C)=\sigma_{gD}(M_C)$$  $$\sigma_{gK}(A)=\sigma_{gD}(A)\,\,\,and \,\,\, \sigma_{gK}(B)=\sigma_{gD}(B)$$

By \cite[Corollary 3.6]{ZZ} and Corollary\ref{c21}, $\sigma_{gD}(M_C)=\sigma_{gD}(A)\cup\sigma_{gD}(B)=\sigma_{gK}(A)\cup\sigma_{gK}(B)$. Therefore :
    $$\sigma_{gK}(M_C)=\sigma_{gK}(A)\cup\sigma_{gK}(B)$$

\end{proof}

\begin{corollary}\label{ccc}
Let $A\in\mathcal{B}(X)$, $B\in\mathcal{B}(Y)$. If $A$ and  $B$ are decomposable , then  for every  $C\in\mathcal{B}(Y,X)$ we have:
\begin{center}
    $\sigma_{gK}(M_C)=\sigma_{gK}(A)\cup\sigma_{gK}(B)$
\end{center}
In particular, If $A$ and $B$ are algebraic or compact.
\end{corollary}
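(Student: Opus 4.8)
The plan is to derive this corollary directly from the preceding theorem by verifying that its hypotheses are met. That theorem requires $A$, $A^*$, $B$, and $B^*$ all to have the SVEP, so the entire task reduces to showing that decomposability of $A$ and $B$ forces SVEP on each of these four operators.

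First I would invoke the fact recorded in the preliminaries, namely that if an operator $T \in \mathcal{B}(X)$ is decomposable then both $T$ and $T^*$ have the SVEP. Applying this to $A$ shows that $A$ and $A^*$ have the SVEP, and applying it to $B$ shows that $B$ and $B^*$ have the SVEP. With all four SVEP hypotheses in hand, the preceding theorem immediately yields $\sigma_{gK}(M_C) = \sigma_{gK}(A) \cup \sigma_{gK}(B)$ for every $C \in \mathcal{B}(Y,X)$.

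For the final ``in particular'' assertion, I would reduce both the algebraic and the compact case to decomposability. An algebraic operator $T$ (one satisfying $p(T)=0$ for some nonzero polynomial $p$) has finite spectrum, equal to the zero set of $p$; operators with finite, hence totally disconnected, spectrum are decomposable. A compact operator is a Riesz operator, whose spectrum is either finite or a sequence converging to $0$, so that every nonzero spectral point is isolated, and such operators are likewise decomposable. In either case the main part of the corollary applies and gives the desired equality.

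This proof is essentially a packaging argument, so I do not expect a genuine obstacle. The only point requiring care is the ``in particular'' clause: one must cite or recall the standard facts that algebraic operators and compact (Riesz) operators are decomposable, rather than re-deriving decomposability from first principles. Once those references are in place, the conclusion follows at once from the main assertion.
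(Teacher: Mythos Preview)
Your proposal is correct and matches the paper's intended argument: the corollary is stated without proof precisely because it follows at once from the preceding theorem together with the fact, recalled in the preliminaries, that decomposability of $T$ implies SVEP for both $T$ and $T^*$. Your handling of the ``in particular'' clause via the decomposability of algebraic and compact operators is also the standard route.
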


Abdelaziz Tajmouati\\
Sidi Mohamed Ben Abdellah University,  Faculty of Sciences Dhar El
Mahraz
\\Fez, Morocco\\
Email: abdelaziztajmouati@yahoo.fr\\\\

Mohammed Karmouni\\
Sidi Mohamed Ben Abdellah University,  Faculty of Sciences Dhar El
Mahraz
\\Fez, Morocco\\
Email: mohammed.karmouni@usmba.ac.ma


\begin{thebibliography}{99}
\bibitem{Aie}  \textsc{P.Aiena.} \emph{Fredholm and Local Spectral Theory with Applications to Multipliers}. Kluwer.Acad.Press,2004.
\bibitem{AMN} \textsc{P. Aiena, T.Miller., M.Neumann} \emph{ On a localized single- valued extension property}. Proc.R.Ir.Acad.104(1)(2004) 17-34
\bibitem{AP}
\textsc{P.Aiena, M.T.Biondi,} \emph{ Ascent, descent, quasi-nilpotent part and analytic
core of operators.,} Mat. Vesnik (2002) 54, 57-70.
\bibitem{BO}
\textsc{E. Boasso.} \emph{Isolated spectral points and Koliha-Drazin invertible elements in quotient Banach
algebras and homomorphism ranges}, ArXiv:1403.3663v1
\bibitem{DS}
\textsc{D. S. Djordjevic and P. S. Stanimirovic,} \emph{On the generalized Drazin inverse and generalized
resolvent,} Czech. Math. J. 51 (126) (2001), 617-634.
\bibitem{D}
\textsc{Drazin MP.}\emph{ Pseudo-inverse in associative rings and semigroups.} Amer. Math. Monthly.
(1958),65:506-514.


\bibitem{HZ}
\textsc{M.Houmidi, H.Zghitti, }\emph{ Propriétés spectrales locales d'une matrice carrée des opérateurs}, Acta Math.Vietnam.25 (2000),137-144
\bibitem{JZ}
\textsc{Q. Jiang, H. Zhong.} \emph{ Generalized Kato decomposition, single-valued extension property and approximate point spectrum.} J. Math. Anal. Appl. 356 (2009) 322-327.
\bibitem{K}
\textsc{Koliha JJ.}\emph{ A generalized Drazin inverse.} Glasgow Math. J. 1996,38:367-81
\bibitem{KMB}\textsc{ Kouider.M.H, Mohammed Benharrat,Bekkai Messirdi,}\emph{ Left and right generalized Drazin invertible operators,} Linear and Multilinear Algebra, (2014), http://dx.doi.org/10.1080/03081087.2014.962534





\bibitem{lau}  \textsc{K.B.Laursen, M.M.Neumann.}
\emph{An introduction to Local Spectral Theory}in: London Mathematical Society Monograph, New series, Vol. 20, Clarendon Press, Oxford, 2000
\bibitem{LT}    \textsc{D. Lay, A. Taylor.}   \emph{Introduction to functional analysis}. J. Wiley and Sons,
N. York. 1980
\bibitem{Mb1}\textsc{M.Mbekhta,}
\emph{Généralisation de la décomposition de Kato aux opérateurs paranormaux et spectraux},
Glasgow Math  J,
 29 (1987) 159-175.

\bibitem{Mul}    \textsc{V. M\"{u}ller.}   \emph{Spectral Theory of Linear Operators and Spectral Systems in
Banach Algebras} 2nd edition. Oper. Theory Advances and
Applications . vol 139 (2007).
\bibitem{Schh}   \textsc{C.Schmoeger.} \emph{On isolated points of the spectrum of a bounded linear operator},Proc. Amer.
Math.Soc. 117 (1993), 715-719

\bibitem{ZZ}
\textsc{H.Zariouh, H. Zghitti.} \emph{On pseudo B-Weyl operators and generalized drazin invertible for operator matrices},arXiv:1503.06611v1.
\bibitem{ZZL}
\textsc{S. Zhang, H. Zhong, L. Lin,} \emph{ Generalized Drazin Spectrum of Operator Matrices,} Appl. Math.
J. Chinese Univ. 29 (2) (2014), 162-170.\\\\
\end{thebibliography}
\end{document}